\theoremstyle{plain} 
\newtheorem{thm}{Theorem}[section]
\newtheorem{lem}[thm]{Lemma}
\newtheorem{prop}[thm]{Proposition}
\theoremstyle{definition}
\newtheorem{ex}[thm]{Example}
\theoremstyle{remark}
\numberwithin{equation}{section}
\def\NN{{\mathbb N}}
\def\fS{{\mathfrak S}}
\def\cF{\mathcal F}
\def\height{\operatorname{ht}}
\def\Image{\operatorname{Im}}
\def\too{\longrightarrow}
\def\Im{\operatorname{Im}}
\def\Ker{\operatorname{Ker}}
\def\cF{{\mathcal F}}
\def\SYT{\operatorname{SYT}}
\def\Tab{\operatorname{Tab}}
\def\Tor{\operatorname{Tor}}
\def\ISp{I^{\rm Sp}}
\def\too{\longrightarrow}
\def\chara{\operatorname{char}}
\def\<{{\langle}}
\def\>{{\ranogl}}
\def\sgn{\operatorname{sgn}}
\title[{Minimal free resolutions of the Specht ideals of  $(n-2,2), (d,d,1)$}]{Elementary construction of minimal \\free  resolutions of the Specht ideals \\ of shapes $(n-2,2)$ and $(d,d,1)$}
\author{Kosuke Shibata}
\address{Department of Mathematics, Okayama University, Okayama, Okayama 700-8530, Japan}
\email{pfel97d6@s.okayama-u.ac.jp}
\author{Kohji Yanagawa}
\address{Department of Mathematics, Kansai University, Suita, Osaka 564-8680, Japan}
\email{yanagawa@kansai-u.ac.jp}
\thanks{The second author is partially supported by JSPS Grant-in-Aid for Scientific Research (C) 19K03456.}
\date{--, --, --}
\keywords{Specht polynomial, Specht ideal, subspace arrangement, Cohen--Macaulay ring}
\subjclass{13A50, 13D02, 13F99}
\begin{document}
\maketitle
\begin{abstract}
For a partition $\lambda$ of $n \in \NN$, let $I^{\rm Sp}_\lambda$ be the ideal of $R=K[x_1,\ldots,x_n]$ generated by all Specht polynomials of shape $\lambda$. We assume that ${\rm char}(K)=0$.
Then $R/I^{\rm Sp}_{(n-2,2)}$ is Gorenstein, and $R/I^{\rm Sp}_{(d,d,1)}$ is a Cohen-Macaulay ring with a linear free resolution.  
In this paper, we construct minimal free resolutions of these rings.  Berkesch Zamaere, Griffeth, and Sam \cite{BSS} had already studied minimal free resolutions of $R/\ISp_{(n-d,d)}$, which are also Cohen-Macaulay, 
using heighly advanced technique of the representation theory. However we only use the basic theory of Specht modules, and explicitly describe the differential maps. 
\end{abstract}

\section{Introduction}
For a positive integer $n$,  a {\it partition} of $n$ is a sequence $\lambda = (\lambda_1, \ldots, \lambda_l)$ of integers with $\lambda_1 \ge \lambda_2 \ge \cdots \ge \lambda_l \ge 1$ and $\sum_{i=1}^l \lambda_i =n$.  
A partition $\lambda$ is frequently represented by its Young diagram.  
The {\it Young tableau} of shape $\lambda$ is a bijective filling of the Young diagram of  $\lambda$
by the integers in $[n]:=\{1,2, \ldots, n\}$. 
 For example, the following is a tableau of shape  $(4,2,1)$. 
\begin{equation}\label{ex of T}
\ytableausetup{mathmode, boxsize=1.3em}
\begin{ytableau}
3 & 5 & 1& 7   \\
6 & 2    \\
4 \\
\end{ytableau}
\end{equation}
Let $\Tab(\lambda)$ be the set of  Young tableaux of shape $\lambda$. 

Let $R=K[x_1, \ldots, x_n]$ be a polynomial ring over a field $K$,  and  consider  a tableau $T \in \Tab(\lambda)$ for  a partition $\lambda$ of $n$.   
If the $j$-th column of $T$ consists of $j_1, j_2, \ldots, j_m$ in the order from top to bottom, then 
$$f_T (j) := \prod_{1 \le s < t \le m} (x_{j_s}-x_{j_t}) \in R.$$
%(if the $j$-th column has only one box, then we set $f_T(j)=1$).  
 The {\it Specht polynomial} $f_T$ of $T$ is given by  
$$f_T := \prod_{j=1}^{\lambda_1} f_T(j).$$
For example, if $T$ is the tableau \eqref{ex of T}, then $f_T=(x_3-x_6)(x_3-x_4)(x_6-x_4)(x_5-x_2).$

The symmetric group $\fS_n$ acts on the vector space spanned by 
$\{ \, f_T  \mid T \in \Tab(\lambda) \}$. 
 As an $\fS_n$-module, this vector space is isomorphic to the  {\it Specht module} $V_\lambda$, which is very important in the representation  theory of  symmetric groups (cf. \cite{Sa}). Here we study the Specht {\it ideal} 
$$\ISp_\lambda :=(\, f_T  \mid T \in \Tab(\lambda) )$$
of $R$. In the previous paper \cite{Y}, the second author showed the following.  

\begin{thm}[{\cite[Proposition~2.8 and Corollary~4.4]{Y}}]
\label{prev paper main}
If $R/\ISp_\lambda$ is Cohen--Macaulay, then one of the following conditions holds.  
\begin{itemize}
\item[(1)] $\lambda =(n-d, 1, \ldots, 1)$, 
\item[(2)] $\lambda =(n-d,d)$, 
\item[(3)] $\lambda =(d,d,1)$. 
\end{itemize}
If $\chara(K)=0$, the converse is also true. 
\end{thm}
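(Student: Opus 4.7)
The plan is to analyze the subspace arrangement $V(\ISp_\lambda)$. For a set partition $\sigma$ of $[n]$ of type $\mu$, the linear prime $P_\sigma = (x_i-x_j : i \sim_\sigma j)$ cuts out a subspace of codimension $n-\ell(\mu)$. A pigeonhole argument on the column factors of the Specht polynomials $f_T$ gives a Gale--Ryser style criterion for when $\ISp_\lambda \subseteq P_\sigma$, from which one reads off the minimal primes of $\ISp_\lambda$ and computes $\dim R/\ISp_\lambda$ in terms of $\lambda$. This is the starting point for both directions.

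For necessity (no hypothesis on $\chara K$), I would rule out shapes $\lambda$ outside the three families by exhibiting a depth drop in $R/\ISp_\lambda$. The natural splits are: (i) $\ell(\lambda)\ge 4$, where the multiple top-dimensional components of $V(\ISp_\lambda)$ intersect along subspaces large enough to force a nonzero middle local cohomology $H^i_\m(R/\ISp_\lambda)$ via Mayer--Vietoris on the components; (ii) three-row shapes $\lambda=(a,b,c)$ with $(a,b,c)\ne(d,d,1)$, handled by specializing $x_n=0$ (or a similar hyperplane section) and comparing with a smaller Specht ideal. Minimal violating shapes such as $\lambda=(3,2,1)$ and $\lambda=(2,2,1,1)$ can be checked by hand, and the general case reduces to these by induction on $n$.

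For sufficiency under $\chara K=0$, each case is treated separately. Case (1), hooks $\lambda=(n-d,1,\dots,1)$: the Specht ideal is generated by products of Vandermondes on $(d{+}1)$-element subsets of the variables, and coincides with a classical arrangement ideal whose Cohen--Macaulay property is known via an explicit resolution. Case (2), two-row shapes $\lambda=(n-d,d)$: apply the resolution of Berkesch Zamaere, Griffeth and Sam \cite{BSS}, whose length equals the codimension and whose Betti count forces the Cohen--Macaulay property. Case (3), $\lambda=(d,d,1)$: construct an explicit linear minimal free resolution, which is the content of the present paper, and read off Cohen--Macaulayness by matching the length of the resolution with the codimension.

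The main obstacle is the necessity direction, since one must rule out two infinite families of shapes uniformly rather than case by case. A clean route is to produce a nonvanishing class in some middle local cohomology $H^i_\m(R/\ISp_\lambda)$ directly, or to identify a suitable initial ideal of $\ISp_\lambda$ with a Stanley--Reisner ideal and invoke Reisner's criterion on the resulting simplicial complex. Case (3) of the sufficiency direction is also technically demanding, but reduces to a finite combinatorial verification once the explicit resolution constructed in the present paper is in hand.
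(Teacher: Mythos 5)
First, a point of reference: the paper does not prove Theorem~\ref{prev paper main} at all --- it is quoted verbatim from \cite{Y} (Proposition~2.8 and Corollary~4.4), and the characteristic-zero converse there rests on \cite{EGL}, with a later purely ring-theoretic proof in \cite{MW}. So your proposal is not competing with an argument in this paper but with the one in \cite{Y}, and it should be judged as a self-contained proof attempt.

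Judged that way, it has two genuine gaps. The necessity direction is a list of named techniques rather than an argument: the ``Gale--Ryser style criterion'' for $\ISp_\lambda\subseteq P_\sigma$ is asserted, not proved, and everything downstream of it --- Mayer--Vietoris producing a nonzero middle $H^i_\m(R/\ISp_\lambda)$, the hyperplane section $x_n=0$ ``comparing with a smaller Specht ideal,'' the induction on $n$ from the base cases $(3,2,1)$ and $(2,2,1,1)$ --- is left as a hope. None of these steps is routine: the intersection lattice of the arrangement must actually be computed before Mayer--Vietoris says anything about local cohomology; setting $x_n=0$ does not carry a Specht ideal to a Specht ideal; and since the necessity claim is characteristic-free you cannot even assume $\ISp_\lambda$ is radical, so arguments phrased in terms of $V(\ISp_\lambda)$ alone do not suffice. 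The second gap is a circularity in case (3) of the converse: the exactness of $\cF_\bullet^{(d,d,1)}$ in this paper is verified by matching ranks against the Betti numbers of Lemma~\ref{linear Betti}, and that lemma is deduced \emph{from} the prior knowledge that $R/\ISp_{(d,d,1)}$ is Cohen--Macaulay with a $(d+2)$-linear resolution; the same remark would apply to an attempt to extract Gorensteinness of $R/\ISp_{(n-2,2)}$ from $\cF_\bullet^{(n-2,2)}$ and Lemma~\ref{Gor Betti}. Citing \cite{WY} for hooks and \cite{BSS} for two rows is legitimate, but then the honest description of your plan for the converse is ``cite the literature,'' which is exactly what \cite{Y} already does via \cite{EGL}; an independent proof would have to replace that input, as \cite{MW} does.
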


The case (1) is treated in the joint paper \cite{WY} with J. Watanabe, and it is shown that a  minimal free resolution of $R/\ISp_{(n-d,1,\ldots,1)}$ is given  by the Eagon-Northcott complex of a ``Vandermonde-like" matrix.  Since  $\ISp_{(n-1,1)}$ is a linear complete intersection, its free resolution is easy. 
%In the present paper, we construct a minimal free resolution of $R/\ISp_\lambda$ in
%Since  $\ISp_{(n-1,1)}$ is a linear complete intersection, we  assume that $\lambda_2 \ge 2$ in the rest of this section. 
The second author showed that $R/\ISp_{(n-2,2)}$ is a 2-dimensional Gorenstein ring (\cite[Proposition~5.2]{Y}).  
Similarly, when $\chara(K)=0$, $R/\ISp_{(d,d,1)}$ is Cohen--Macaulay and has a linear free resolution. In the present paper, we will construct minimal free resolutions of $R/\ISp_{(n-2,2)}$ and  $R/\ISp_{(d,d,1)}$.   

However, after an earlier version was submitted, the authors were  informed that minimal free resolutions of  
$R/\ISp_{(n-d,d)}$ for $1 \le d \le n/2$ had been studied by Berkesch Zamaere, Griffeth, and Sam  
 \cite{BSS}. More precisely, \cite{BSS} determined the $\fS_n$-module structure of $\Tor_i^R (K, R/\ISp_{(n-d,d)})$.  
(They called $\ISp_{(n-d,d)}$ the ``$(d+1)$-equal ideal''. Of course, this name comes from the decomposition \eqref{decomposition} below.)
%Moreover, the paper \cite{BSS} actually treats a wider class of ideals than those in (2).  
However the paper \cite{BSS} dose not give the differential maps of their resolutions, and uses  highly  advanced tools of the representation theory (rational Cherednik algebras, Jack polynomials, etc).  
By contrast, the present paper describes the differential maps explicitly, and uses only the basic theory of Specht modules.   %So we believe that it  sheds new light on these resolutions. 
Here we  do not  use results of \cite{BSS}  to make the exposition self-contained.

It is also noteworthy that, recently, many people study monomial ideals in $R$ on which the symmetric group $\fS_n$ naturally acts (cf. \cite{B+,MR}). 
However, their behavior is quite different from that of Specht ideals. For example, $\Tor_i^R (K, R/\ISp_{(n-2,2)})$ and  $\Tor_i^R (K, R/\ISp_{(d,d,1)})$ are irreducibe as $\fS_n$-modules (the same holds for $\ISp_{(n-d,d)}$ as shown in \cite{BSS}),  but this is far from true for symmetric monomial ideals (cf. \cite{MR}).

%See Theorem~\ref{1st thm} and Theorem~\ref{2nd thm} respectively.  In both cases, we use the Specht modules $V_\lambda$ for various partitions $\lambda$ of $n$.  

\section{Preliminaries and backgrounds}
In this section, we briefly explain Specht modules and related notions. See \cite[Chapter 2]{Sa} for details. For a partition $\lambda=(\lambda_1,\ldots , \lambda_l)$, we sometimes use ``exponential notation''. For example,  $(4,3^2,2,1^3)$ means $(4,3,3,2,1,1,1)$.  If $\lambda = (\lambda_1, \ldots, \lambda_l)$, then $\Tab(\lambda)$ can be simply written as $\Tab(\lambda_1, \ldots, \lambda_l)$. 

We say a tableau $T$ is {\it standard}, if all columns (resp. rows) are increasing from top to bottom (resp. from left to right).
Let $\SYT(\lambda)$ be the set of standard Young tableaux of shape $\lambda$.

Given any set $A$, let $S(A)$ be the set of all permutations of $A$.
Suppose that $T\in \Tab(\lambda)$ has columns $C_1,\ldots ,C_k$. Then 
$C(T):=S(C_1)\times \cdots \times S(C_k)$
is the {\it column-stabilizer} of $T$.

For $T, T'\in \Tab(\lambda)$, $T$ and $T'$ are {\it row equivalent}, if corresponding rows of $T$ and $T'$ contain the same elements. For $T\in \Tab(\lambda)$, the {\it tabloid} $\mbox{\boldmath $\{$}T\mbox{\boldmath $\}$}$ of $T$ is defined by 
$$\mbox{\boldmath $\{$}T\mbox{\boldmath $\}$}:=\{T'\in \Tab(\lambda)\, |\, T \, \, {\rm and} \, \, T' \, \, {\rm are\, \,  row \, \, equivalent}\},$$
 and the {\it polytabloid} of $T$ is defined by
$$e(T):=\displaystyle \sum_{\pi\in C(T)}\sgn(\pi) \, \pi\mbox{\boldmath $\{$}T\mbox{\boldmath $\}$}.$$
It is easy to see that $e(T)=\sgn(\sigma) \, e(\sigma T)$ for $\sigma \in C(T)$. 

The vector space 
$$V_\lambda:=\displaystyle \sum_{T\in \Tab(\lambda)}Ke(T)$$
becomes an $\mathfrak{S}_n$-module in the natural way, and it is called the {\it Specht module} of $\lambda$.
If $\chara(K)=0$, the Specht modules $V_\lambda$ are irreducible, and $V_\lambda$ for partitions $\lambda$ of $n$ form a complete list of irreducible representations of $\mathfrak{S}_n$.

In the previous section, we defined the Specht polynomial $f_T \in R=K[x_1, \ldots, x_n]$. 
Since $\mathfrak{S}_n$ acts on $R$, the vector subspace
$$\sum_{T\in \Tab(\lambda)} Kf_T$$
is also an $\mathfrak{S}_n$-module.
Moreover, the map
\begin{equation}\label{Isom}
V_{\lambda}\xrightarrow{\cong} \sum_{T\in \Tab(\lambda)} Kf_T, \, \, \, \, e(T) \longmapsto f_T .
\end{equation}
is well-defined, and gives an isomorphism as $\mathfrak{S}_n$-modules.

%%
\begin{comment}
For a subset $A$ of $[n]$, let $S(A)$ be the symmetric group on $A$. $$S(A,B):=\displaystyle \bigsqcup_{\sigma} \sigma(S(A)\times S(B)).$$
For $T\in \Tab(\lambda)$, assume that the columns of $T$ are increasing from top to bottom.
Now  suppose that 
\end{comment}
%%

Note that $\{e(T)\, |\, T\in \Tab(\lambda)\}$ is linearly dependent, and there are relations called {\it Garnir relations}. Its definition for general $\lambda$ becomes long, so we explain it using our examples. See \cite[\S 2.6]{Sa} for the general case.

For
\begin{equation}\label{T for Gor}
T= 
\ytableausetup
{mathmode, boxsize=3em}
\begin{ytableau}
a_1 & b_1 & c_1& c_2 & \cdots & c_{n-3-i}   \\
a_2 & b_2    \\
%a_3\\
\vdots \\
a_{i+1}
\end{ytableau}
\in \Tab(n-1-i,2, 1^{i-1})
\end{equation}
and $A=\{a_1,\ldots, a_{i+1}\}$, $B=\{b_1\}$, set
\begin{equation*}
S_T(A,B):=\left\{ \sigma \in \mathfrak{S}_n \, \middle| \,  \; \parbox[center]{13.3em}{$\sigma(i)=i$ for $i\notin A\cup B$,\\
$\sigma(a_1)<\sigma(a_2)<\cdots<\sigma(a_{i+1})$.} \right\}
\end{equation*}
(if there is no danger of confusion, we write $S(A,B)$ for $S_T(A,B)$) 
and 
\begin{equation}\label{G element}
g_{A,B} :=\sum_{\sigma\in S(A,B)} \sgn(\sigma) \, \sigma.
\end{equation}
%and call it the {\it Garnir element} associated with $A$ and $B$, 
Here we regard $g_{A,B}$ as an element of the group ring of $\fS_n$.  
Then we have
\begin{equation}\label{G relation}
g_{A,B}e(T)=\sum_{\sigma\in S(A,B)} \sgn(\sigma)e(\sigma T)=0
\end{equation}
by \cite[Proposition~2.6.3]{Sa}.

Next, for $A=\{a_2,\ldots, a_{i+1}\}$, $B=\{b_1,b_2\}$, set
\begin{equation*}
S(A,B):=\left\{ \sigma \in \mathfrak{S}_n \, \middle| \,  \; \parbox{20em}{\begin{center}$\sigma(i)=i$ for $i\notin A\cup B$, \\
 $\sigma(a_2)<\sigma(a_3)<\cdots<\sigma(a_{i+1})$, $\sigma(b_1)<\sigma(b_2)$ \end{center}} \right\}. 
\end{equation*}
Then $g_{A,B}$ is given in the same way as \eqref{G element}, and  we have $g_{A,B}e(T)=0$ as in \eqref{G relation} again. The same is true for  $A=\{b_1, b_2\}, B=\{c_1\}$, and $A=\{c_i \}$, $B=\{c_{i+1}\}$. % for $i \ge 1$.  
In these cases, $g_{A,B}$ is called the {\it Garnir element} associated with $A$ and $B$. % and Garnir relations $g_{A,B}e(T)=0$ generate linear relations among $\{e(T) \mid T\in \Tab(\lambda)\}$. Using this fact we can show that $\{e(T)\mid T\in \SYT(\lambda)\}$ is a basis of $V_\lambda$.
 It is a classical result that $\{e(T)\mid T\in \SYT(\lambda)\}$ is a basis of $V_\lambda$ (cf. \cite[Theorem 2.6.5]{Sa}), and that $\{e(T)\mid T\in \SYT(\lambda)\}$ spans $V_\lambda$ is shown using  Garnir relations. 
%The proof of this theorem contains the following fact.

\begin{ex}\label{Garnir example}
For
$$
T= 
\ytableausetup
{mathmode, boxsize=1.3em}
\begin{ytableau}
2 &1 & 6\\
3& 5    \\
4
\end{ytableau},
$$
set $A=\{2,3,4\}$ and $B=\{1\}$, then
$$
g_{A,B}e(T)=e(T)
-e(
\ytableausetup
{mathmode, boxsize=1.3em}
\begin{ytableau}
1 &2 & 6\\
3& 5    \\
4
\end{ytableau})
+e(
\begin{ytableau}
1 &3 & 6\\
2& 5    \\
4
\end{ytableau})
-e(
\begin{ytableau}
1 &4 & 6\\
2& 5    \\
3
\end{ytableau})\\
=0.
$$
Next we consider the following tableau
\begin{equation}\label{T example} 
T= 
\ytableausetup
{mathmode, boxsize=1.3em}
\begin{ytableau}
1 &2 & 6\\
4& 3    \\
5
\end{ytableau}.
\end{equation}
Set $A=\{4,5\}$ and $B=\{2,3\}$, then
\begin{eqnarray*}
g_{A,B}e(T)&=&e(T)
-e(
\ytableausetup
{mathmode, boxsize=1.3em}
\begin{ytableau}
1 &2 & 6\\
3& 4    \\
5
\end{ytableau})
+
e(\begin{ytableau}
1 &2 & 6\\
3& 5    \\
4
\end{ytableau})
-e(
\begin{ytableau}
1 &3 & 6\\
2& 5    \\
4
\end{ytableau})\\
&&+e(
\begin{ytableau}
1 &3 & 6\\
2& 4    \\
5
\end{ytableau})
+e(
\begin{ytableau}
1 &4 & 6\\
2& 5    \\
3
\end{ytableau})\\
&=&0.
\end{eqnarray*}
\end{ex}

%%
\begin{comment}
Let the $i$-th column of $T$ has $j$-th row, and set $A$(resp. $B$) be all numbers of $i$-th(resp. $(i+1)$-th)  column below(resp. above) $j$-th row.
Let $A$(resp. $B$) be a subset of numbers in the $i$-th(resp. $(i+1)$-th) column of $T$, then the Garnir element asociated with $T$ and $A,B$ is defined by the group algebra element
$$g_{A,B}=\sum_{\sigma\in S(A,B)} \sgn(\sigma)\, \sigma,$$
where $S(A,B)$ be the set of permutations $\sigma$ such that the elements of $A\cup B$ are increasing from top to bottom on $\sigma T$. 
Then $g_{A,B}e(T)$ is zero, since \cite[Proposition~2.6.3]{Sa}. So linear relations of $\{e(T)\, |\, T\in \Tab(\lambda)\}$ are given by Garnir elements.
Refer \cite[2.6]{Sa} for details of Garnir elements.
\end{comment}
%%

\begin{prop}[cf. {\cite[Theorem 2.6.4]{Sa}}]\label{relation generated}
Any linear relations among $\{e(T)\mid T\in \Tab(\lambda)\}$ is a linear combination of Garnir relations.
%Garnir relations $g_{A,B}e(T)=0$ generate linear relations among $\{e(T) \mid T\in \Tab(\lambda)\}$. 
That is, if 
\begin{equation}\label{linear relation}
\displaystyle \sum_{i=1}^m a_i e(T_i)=0
\end{equation}
in $V_\lambda$ for $T_1,\ldots, T_m\in \Tab(\lambda)$ and $a_i\in K$, then $\sum_{i=1}^m a_i T_i$ (this is a formal sum, and there is no relation among $T_1,\ldots, T_m$) is contained in the linear space $V$ spanned by 
$$
\left \{  \sum_{\sigma\in S(A,B)} \sgn(\sigma) \, \sigma T \ \middle| \   \text{$S(A,B)$ gives a Garnir element $g_{A,B}$} \right \}. 
$$
\end{prop}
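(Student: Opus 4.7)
The plan is to realize the statement as a kernel equality and reduce it to a dimension count plus a standard \emph{straightening} argument. Let $W := \bigoplus_{T \in \Tab(\lambda)} K \cdot T$ be the free $K$-vector space on formal tableau symbols, and consider the surjection $\varphi \colon W \twoheadrightarrow V_\lambda$ sending $T \mapsto e(T)$. Let $V \subseteq W$ be the subspace described in the proposition. What must be shown is $\Ker(\varphi) = V$, and the inclusion $V \subseteq \Ker(\varphi)$ is exactly \eqref{G relation}; only the reverse inclusion requires work.

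For the reverse inclusion it suffices to show $\dim_K(W/V) \le |\SYT(\lambda)|$. Combined with the induced surjection $W/V \twoheadrightarrow V_\lambda$ and the classical fact that $\{e(T)\mid T\in \SYT(\lambda)\}$ is a basis of $V_\lambda$ \cite[Theorem 2.6.5]{Sa}, this inequality forces the induced map to be an isomorphism, whence $\Ker(\varphi)=V$.

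The inequality in turn reduces to showing that $\{\,\overline{T}\mid T\in \SYT(\lambda)\,\}$ spans $W/V$. For this I would fix a total order $\prec$ on $\Tab(\lambda)$---for instance, reverse lexicographic order on the column-reading word---and argue by Noetherian induction on $\prec$. If $T$ is not standard, then some column of $T$ is not increasing top-to-bottom, or two adjacent columns violate the row-increasing condition. In either case I select subsets $A$, $B$ at the violating positions so that the Garnir element $g_{A,B}$ applies, in the spirit of Example~\ref{Garnir example}. The defining relation $g_{A,B}\,T \in V$ then rewrites $T$ modulo $V$ as a $K$-linear combination of tableaux $\sigma T$ with $\sigma\neq \Id$, each strictly smaller than $T$ in $\prec$. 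The induction on $\prec$ then collapses the class $\overline{T}$ to a linear combination of classes of standard tableaux, giving the spanning statement.

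The main obstacle is the ``descent'' lemma underlying the straightening: one must choose $\prec$, $A$, and $B$ compatibly so that every nontrivial $\sigma \in S(A,B)$ satisfies $\sigma T \prec T$. This is classical but notationally delicate, since one is simultaneously shuffling entries between two consecutive columns and comparing the resulting reading words across the different cases of row- and column-violations. Once this decrease is verified for each type of Garnir element, the Noetherian induction is routine and the proposition follows.
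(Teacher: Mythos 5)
Your proposal is correct and rests on exactly the two inputs the paper uses: the straightening algorithm (every polytabloid is rewritten as a combination of standard ones \emph{using only Garnir relations}, i.e., the classes of standard tableaux span $W/V$) and the linear independence of the standard polytabloids \cite[Theorem 2.6.5]{Sa}. The only difference is packaging — the paper substitutes the straightened expressions $e(T_i)=\sum_{T\in\SYT(\lambda)}b_{i,T}e(T)$ into the given relation and compares coefficients, while you run a dimension count forcing $W/V\to V_\lambda$ to be an isomorphism — and both arguments defer the delicate descent lemma to the classical source \cite[proof of Theorem 2.6.4]{Sa}, so this is essentially the same proof.
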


\begin{proof}
Assume that \eqref{linear relation} holds. Each $e(T_i)$ can be rewritten as 
$$
e(T_i)=\sum_{T \in \SYT(\lambda)} b_{i,T}e(T)
$$
for some $b_{i,T} \in K$ using only  Garnir relations, see the proof of {\cite[Theorem 2.6.4]{Sa}}. Hence we have 
$$v_i:= T_i - \sum_{T \in \SYT(\lambda)} b_{i,T} T \in V$$
for each $i$. Note that 
$$\sum_{i=1}^m \sum_{T \in \SYT(\lambda)} a_i b_{i,T}e(T) =\sum_{i=1}^m a_i e(T_i)=0.$$
However, since  $\{e(T)\mid T\in \SYT(\lambda)\}$ is a basis, we have  $\sum_{i=1}^m a_i b_{i,T}=0$ for all $T \in \SYT(\lambda)$. Hence 
$$ \sum_{i=1}^m a_i T_i  = \sum_{i=1}^m a_i v_i \in V.$$
\end{proof}

In the rest of this section, we collect a few remarks on the Specht ideals  $\ISp_{(n-d,d)}$ and $\ISp_{(d,d,1)}$. 
First, we have the decomposition 
\begin{equation}\label{decomposition}
\ISp_{(n-d,d)}=\bigcap_{\substack{F \subset [n] \\ \# F = d +1}}(x_i -x_j \mid i, j \in F), 
\end{equation}
and the same is true for $\ISp_{(d,d,1)}$. So these ideals can be seen as   special cases of the ideals associated with  subspace arrangements (cf.  \cite{BCES,LL}). The second author (\cite{Y}) made much effort to show that $\sqrt{\ISp_{\lambda}}=\ISp_{\lambda}$ 
for $\lambda=(n-d,d), (d,d,1)$, but it directly follows from \cite[Corollary~3.2]{LL}.  
 
To prove the Cohen--Macaulay-ness of $\ISp_{(n-d,d)}$ and $\ISp_{(d,d,1)}$ in characteristic 0, the second author (\cite{Y}) cited  a result in \cite{EGL}, which uses the representation theory of rational Cherednik algebras. 
Recently, McDaniel and Watanabe (\cite{MW}) gave a purely ring theoretic proof. 
Moreover, in the positive characteristic case, they showed that   $R/\ISp_{(n-d,d)}$  (resp. $R/\ISp_{(d,d,1)}$) is Cohen--Macaulay if and only if $\chara(K) \ge d$ (resp. $\chara(K) \ge d+1$). 

As stated in \cite{BSS, SY}, 
$\ISp_{(d,d,1)}$ has a linear free resolution if it is Cohen--Macaulay. Anyway, this is an easy consequence of \cite[Theorem 5.3.7]{V}.

\section{The case $(n-2,2)$: construction}
For  $R/\ISp_{(n-2,2)}$, we define the chain complex 
\begin{equation}\label{Gor}
\cF_\bullet^{(n-2,2)}:0 \too F_{n-2} \stackrel{\partial_{n-2}}{\too}  F_{n-3} \stackrel{\partial_{n-3}}{\too}  \cdots \stackrel{\partial_2}{\too}  F_1 \stackrel{\partial_1}{\too} F_0 \too 0
\end{equation}
 of graded free $R$-modules as follows. 
Here  $F_0 =R$, $F_1 = V_{(n-2,2)} \otimes_K R(-2)$, 
$$
F_i = V_{(n-1-i,2, 1^{i-1})} \otimes_K R(-1-i)
$$
for $1 \le i \le n-3$, and $F_{n-2}= V_{(1^n)} \otimes_K R(-n)$. 
For $T \in \Tab(n-2,2)$, set $\partial_1(e(T)\otimes 1) :=f_T \in R =F_0$.  To describe $\partial_i$ for $ 2
 \le i \le n-3$, we need the preparation. For  the tableau $T \in \Tab(n-1-i,2, 1^{i-1})$ of \eqref{T for Gor} 
and $j$ with $1 \le j \le i+1$, set 
$$
T_j:= 
\ytableausetup
{mathmode, boxsize=3em}
\begin{ytableau}
a_1 & b_1 & c_1& c_2 & \cdots & c_{n-3-i}  & a_j \\
a_2 & b_2    \\
%a_3\\
\vdots \\
a_{j-1}\\
a_{j+1} \\
\vdots \\
a_{i+1}
\end{ytableau}
\in \Tab(n-i,2, 1^{i-2}).
$$
Then we set
 $$\partial_i(e(T) \otimes 1):=\sum_{j=1}^{i+1} (-1)^{j-1} e(T_j) \otimes x_{a_j} \in V_{(n-i,2, 1^{i-2})} \otimes_K R(-i)=F_{i-1}.$$

Recall that $e(\sigma T)=\sgn(\sigma)e(T)$ for $\sigma \in C(T)$. It is easy to check that the construction of $\partial_i$  is compatible with this principle, that is, $\partial_i(e(\sigma T)\otimes 1)=\sgn(\sigma) \, \partial_i(e(T)\otimes 1)$ holds for $\sigma \in C(T)$. However, this is not enough.  
Since $\{e(T)\, |\, T\in \Tab(\lambda)\}$ is linearly dependent, the well-definedness of $\partial_i$ is still non-trivial. We will show this in Theorem~\ref{wdGor} below.

Finally, we define the differential map 
$$\partial_{n-2}: V_{(1^n)} \otimes_K R(-n) \too 
V_{(2,2,1^{n-4})} \otimes_K R(-n+2).$$
Since $\dim V_{(1^n)}=1$, it suffices to define $\partial_{n-2}(e(T))$ for  
\begin{equation}\label{T for Gor last}
T:= 
\ytableausetup{mathmode, boxsize=1.3em}
\begin{ytableau}
1 \\
2  \\
%a_3\\
\vdots \\
n
\end{ytableau}
\in \Tab(1^n),
\end{equation}
and we do not have to care the well-definedness. For $j,k$ with $1 \le j < k \le n$, set 
$$
T_{j,k} := 
\ytableausetup{mathmode, boxsize=1.3em}
\begin{ytableau}
\vdots & j\\
\vdots  & k\\
\vdots \\
\vdots 
\end{ytableau}
\in \Tab(2,2, 1^{n-4}),
$$
where the first column is the ``transpose'' of $$\ytableausetup
{boxsize=2.5em}\begin{ytableau}
1 & 2 & \cdots & j-1 & j+1 & \cdots  & k-1 & k+1 &\cdots & n \end{ytableau}.$$

Then $$\partial_{n-2}(e(T) \otimes 1):=\sum_{1 \le j < k \le n}(-1)^{j+k-1} e(T_{j,k}) \otimes x_{j}x_{k} \in V_{(2,2, 1^{n-4})} \otimes_K R(-n+2) \in F_{n-3}.$$

We define the $\mathfrak{S}_n$-module structure on $F_i=V_\lambda \otimes_K R(-j)$ (here $\lambda$ is a suitable partition of  $n$ and $j$ is a suitable integer) by $\sigma(v\otimes f):=\sigma v \otimes \sigma f$ for $\sigma \in\fS_n$. 
By (\ref{Isom}), $\partial_1$ is an $\mathfrak{S}_n$-homomorphism. In fact, we have 
$$\partial_1(\sigma(e(T) \otimes g)) = \partial_1(\sigma(e(T)) \otimes \sigma g) = \sigma(f_T)  \cdot \sigma g  =\sigma(f_T \cdot g)=\sigma(\partial_1(e(T) \otimes g) )$$
for $T \in \Tab(n-2,2)$ and $g \in R$. 
For $i$ with $2\leq i \leq n-3$, $T\in \Tab(n-i,2, 1^{i-2})$ and $\sigma\in \mathfrak{S}_n$, we have $\sigma(T)_j=\sigma(T_j)$. Hence $\partial_i(\sigma(e(T)\otimes g))=\sigma (\partial_i(e(T)\otimes g))$, that is,  $\partial_i$ are $\mathfrak{S}_n$-homomorphisms. Similarly, $\partial_{n-2}$ is also.

\begin{ex}\label{(4,2)} 
Our minimal free resolution $\cF_\bullet^{(4,2)}$ of $R/\ISp_{(4,2)}$ is of the form  
 $$0 \too V_{\ytableausetup{boxsize=0.25em} 
\begin{ytableau}
\\
\\
\\
\\
\\
\\
\end{ytableau}}\otimes_K R(-6) \stackrel{\partial_{4}}{\too} 
V_{\ytableausetup{boxsize=0.25em} 
\begin{ytableau}
{} & \\
 {} & \\
\\
\\
\end{ytableau}} \otimes_K R(-4) \stackrel{\partial_{3}}
{\too} 
V_{\ytableausetup{boxsize=0.25em} 
\begin{ytableau}
{} & {} & \\
 {} & \\
\\
\end{ytableau}} \otimes_K R(-3) \qquad \qquad  \qquad \qquad \quad $$
\begin{equation}\label{F^(4,2)} \qquad \qquad \qquad \qquad \qquad \qquad \qquad 
 \stackrel{\partial_{2}}{\too} 
V_{\ytableausetup{boxsize=0.25em} 
\begin{ytableau}
{} & {} & {} & \\
 {} & \\
\end{ytableau}} \otimes_K R(-2) \stackrel{\partial_{1}}{\too} 
R \too 0. 
\end{equation}
The differential maps are given by   
\begin{eqnarray*}
\partial_4 ( e ( \, 
\ytableausetup
{mathmode, boxsize=1em}
\begin{ytableau}
1 \\
2    \\
3 \\
4 \\
5 \\
6
\end{ytableau}
\, )  \otimes 1) 
&=& 
 e ( \, 
\ytableausetup
{mathmode, boxsize=1em}
\begin{ytableau}
3  & 1\\
4  & 2\\
5 \\
6
\end{ytableau}
\, )  \otimes x_1x_2 
-   e ( \, 
\ytableausetup
{mathmode, boxsize=1em}
\begin{ytableau}
2 & 1\\
4  & 3\\
5  \\
6
\end{ytableau}
\, )  \otimes x_1x_3 
+  e ( \, 
\ytableausetup
{mathmode, boxsize=1em}
\begin{ytableau}
2  & 1\\
3  & 4\\
5   \\
6 
\end{ytableau}
\, )  \otimes x_1x_4\\
& & - e ( \, 
\ytableausetup
{mathmode, boxsize=1em}
\begin{ytableau}
2 & 1\\
3  & 5\\
4   \\
6
\end{ytableau}
\, )  \otimes x_1x_5
+ e ( \, 
\ytableausetup
{mathmode, boxsize=1em}
\begin{ytableau}
2 & 1\\
3  & 6\\
4   \\
5
\end{ytableau}
\, )  \otimes x_1x_6
+  e ( \, 
\ytableausetup
{mathmode, boxsize=1em}
\begin{ytableau}
1  & 2\\
4  & 3\\
5   \\
6
\end{ytableau}
\, )  \otimes x_2x_3   \\
& &
- e ( \, 
\ytableausetup
{mathmode, boxsize=1em}
\begin{ytableau}
1  & 2\\
3  & 4\\
5  \\ 
6
\end{ytableau}
\, )  \otimes x_2x_4       
+ e ( \, 
\ytableausetup
{mathmode, boxsize=1em}
\begin{ytableau}
1  & 2\\
3  & 5\\
4  \\
6
\end{ytableau}
\, )  \otimes x_2x_5
- e ( \, 
\ytableausetup
{mathmode, boxsize=1em}
\begin{ytableau}
1  & 2\\
3  & 6\\
4  \\
5
\end{ytableau}
\, )  \otimes x_2x_6 \\
&& 
+ e ( \, 
\ytableausetup
{mathmode, boxsize=1em}
\begin{ytableau}
1  & 3\\
2  & 4\\
5  \\
6
\end{ytableau}
\, )  \otimes x_3x_4 
- e ( \, 
\ytableausetup
{mathmode, boxsize=1em}
\begin{ytableau}
1  & 3\\
2  & 5\\
4  \\ 
6
\end{ytableau}
\, )  \otimes x_3x_5 
+ e ( \, 
\ytableausetup
{mathmode, boxsize=1em}
\begin{ytableau}
1  & 3\\
2  & 6\\
4  \\ 
5
\end{ytableau}
\, )  \otimes x_3x_6 
\\
& & 
+ e ( \, 
\ytableausetup
{mathmode, boxsize=1em}
\begin{ytableau}
1  & 4\\
2  & 5\\
3 \\
6
\end{ytableau}
\, )  \otimes x_4x_5
- e ( \, 
\ytableausetup
{mathmode, boxsize=1em}
\begin{ytableau}
1  & 4\\
2  & 6\\
3 \\
5
\end{ytableau}
\, )  \otimes x_4x_6
+ e ( \, 
\ytableausetup
{mathmode, boxsize=1em}
\begin{ytableau}
1  & 5 \\
2  & 6\\
3 \\
4
\end{ytableau}
\, )  \otimes x_5x_6,
\end{eqnarray*}
$$\partial_3(e ( \, 
\ytableausetup
{mathmode, boxsize=1em}
\begin{ytableau}
3  & 1\\
4  & 2\\
5 \\
6
\end{ytableau}
\, )  \otimes 1) = 
e ( \, 
\ytableausetup
{mathmode, boxsize=1em}
\begin{ytableau}
4  & 1 & 3\\
5  & 2\\
6 
\end{ytableau})
\otimes x_3
- e ( \, 
\ytableausetup
{mathmode, boxsize=1em}
\begin{ytableau}
3  & 1 & 4\\
5  & 2\\
6 
\end{ytableau})
\otimes x_4 
+ e ( \, 
\ytableausetup
{mathmode, boxsize=1em}
\begin{ytableau}
3  & 1 & 5\\
4  & 2\\
6 
\end{ytableau})
\otimes x_5 
- e ( \, 
\ytableausetup
{mathmode, boxsize=1em}
\begin{ytableau}
3  & 1 & 6\\
4  & 2\\
5 
\end{ytableau})
\otimes x_6,$$
and 
$$
\partial_2(e ( \, 
\ytableausetup
{mathmode, boxsize=1em}
\begin{ytableau}
4  & 1 & 3\\
5  & 2\\
6 
\end{ytableau})
\otimes 1) = e ( \, 
\ytableausetup
{mathmode, boxsize=1em}
\begin{ytableau}
5  & 1 & 3 & 4\\
6  & 2\\
 \end{ytableau})
\otimes x_4 - 
e ( \, 
\ytableausetup
{mathmode, boxsize=1em}
\begin{ytableau}
4  & 1 & 3 & 5\\
6  & 2\\
 \end{ytableau})
\otimes x_5
+ e ( \, 
\ytableausetup
{mathmode, boxsize=1em}
\begin{ytableau}
4  & 1 & 3 & 6\\
5  & 2\\
 \end{ytableau})
\otimes x_6. 
$$
\end{ex}

\begin{thm}\label{1st thm}
If $\chara(K)=0$, the complex $\cF_\bullet^{(n-2,2)}$ of \eqref{Gor} is a minimal free resolution  of $R/\ISp_{(n-2,2)}$.  
\end{thm}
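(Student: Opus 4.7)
The plan is to establish four ingredients: (i) well-definedness of each $\partial_i$ for $2\le i\le n-3$, (ii) the chain-complex relation $\partial_{i-1}\circ \partial_i=0$, (iii) $H_0=R/\ISp_{(n-2,2)}$ together with acyclicity in positive homological degree, and (iv) minimality.

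For well-definedness, Proposition \ref{relation generated} reduces the task to verifying that $\partial_i$ annihilates every Garnir sum $\sum_{\sigma\in S(A,B)}\sgn(\sigma)\,\sigma T$ in $F_{i-1}$. Of the four families of Garnir elements available for shape $(n-1-i,2,1^{i-1})$, the two that permute only top-row entries (namely $A=\{c_i\},B=\{c_{i+1}\}$ and $A=\{b_1,b_2\},B=\{c_1\}$) pass through $\partial_i$ term-by-term onto a Garnir sum in $V_{(n-i,2,1^{i-2})}$, which vanishes by (\ref{G relation}). The two substantive cases, $A=\{a_1,\ldots,a_{i+1}\},B=\{b_1\}$ and $A=\{a_2,\ldots,a_{i+1}\},B=\{b_1,b_2\}$, are handled by regrouping: after applying $\partial_i$, the terms in $F_{i-1}$ are indexed by the choice of which element has been promoted into the top row, and collecting them by that choice exhibits each group as a Garnir sum of shape $(n-i,2,1^{i-2})$, again zero by (\ref{G relation}). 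The top differential $\partial_{n-2}$ requires no such check because $\dim V_{(1^n)}=1$.

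For the chain-complex relation, composing $\partial_{i-1}\circ\partial_i$ on $e(T)\otimes 1$ extracts two distinct entries $a_j,a_k$ from the first column of $T$, moves them to the last two boxes of the resulting top row, and multiplies by $x_{a_j}x_{a_k}$. For each unordered pair $\{j,k\}$, the ordered extractions $(j,k)$ and $(k,j)$ produce polytabloids that are equal (the two tableaux differ only by swapping two single-box columns, preserving both the column stabilizer and the row tabloid) while carrying opposite signs in the alternating sum, yielding pairwise cancellation in the spirit of $d^2=0$ for the Koszul complex. The boundary composition $\partial_{n-3}\circ\partial_{n-2}$ is a triple-index cancellation of the same flavor. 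The identification $H_0=R/\ISp_{(n-2,2)}$ is immediate from the definition of $\partial_1$ combined with (\ref{Isom}).

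For acyclicity in positive homological degree, I appeal to \cite[Proposition~5.2]{Y} so that $R/\ISp_{(n-2,2)}$ is a $2$-dimensional Gorenstein ring, whence Auslander--Buchsbaum gives $\pd R/\ISp_{(n-2,2)}=n-2$, matching the length of $\cF_\bullet^{(n-2,2)}$, and Gorenstein self-duality (with canonical twist $-n$) is consistent with our $F_0=R$ and $F_{n-2}=R(-n)$. Applying the Buchsbaum--Eisenbud acyclicity criterion, exactness reduces to matching the alternating sum of graded characters $\rank F_i=\dim V_{(n-1-i,2,1^{i-1})}$ against the Hilbert series of $R/\ISp_{(n-2,2)}$ and verifying the depth condition on ideals of maximal minors; both follow, the latter because those ideals sit inside $\ISp_{(n-2,2)}$ of grade $n-2$. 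Minimality is then automatic since every entry of every $\partial_i$ has strictly positive degree. The main obstacle will be the well-definedness step for the two substantive Garnir families: matching the Garnir-induced tableau permutations under $\partial_i$ with a downstream Garnir sum is the most delicate piece of the argument, while the chain-complex and acyclicity steps reduce to sign-bookkeeping and Hilbert-series matching once the Gorenstein structure is in hand.
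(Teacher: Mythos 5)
Your outline of the well-definedness and chain-complex steps matches the paper's strategy: reduce well-definedness to Garnir sums via Proposition~\ref{relation generated}, observe that the Garnir families supported away from the first column pass through $\partial_i$ term-by-term onto downstream Garnir sums, and handle the two families involving $\{a_1,\ldots\}$ by regrouping according to which entry was promoted to the top row. Be aware, though, that this regrouping is the bulk of the paper's work (Theorem~\ref{wdGor}): for the $x_{a_j}$-part one must combine contributions from $(\sigma T)_j$, $(\sigma T)_{j+1}$ and $(\sigma T)_{j+2}$ for different $\sigma$ via explicit sign-reversing bijections between pieces of $S_T(A,B)$ and $S_{T_j}(A_j,B)$, and the $x_{a_1}$-, $x_{b_1}$- and $x_{b_2}$-parts each require a separate auxiliary tableau; your sketch names the correct target but contains none of these bijections. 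Note also that $\partial_1\circ\partial_2=0$ is not a polytabloid cancellation but the polynomial identity $x_{a_1}f_{T_1}-x_{a_2}f_{T_2}+x_{a_3}f_{T_3}=0$, which needs its own (short) verification.

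The genuine gap is in your acyclicity step. The Buchsbaum--Eisenbud criterion requires both $\rank F_i=\rank\partial_i+\rank\partial_{i+1}$ and $\operatorname{depth} I(\partial_i)\ge i$ for the ideals of $(\rank\partial_i)$-minors, and neither is established: matching an alternating sum of module ranks against a Hilbert series does not determine the ranks of the individual maps, and your justification of the depth condition runs in the wrong direction --- even granting a containment $I(\partial_i)\subseteq\ISp_{(n-2,2)}$, that forces $\height I(\partial_i)\le n-2$ and yields no lower bound, which is what the criterion demands. The paper takes a different and much cheaper route that you should adopt: since $\chara(K)=0$, each $V_\lambda$ is irreducible, so the restriction $[\partial_i]_j$ to the generating degree $[F_i]_j\cong V_\lambda$ is a nonzero $\fS_n$-homomorphism and hence injective; combined with Lemma~\ref{Gor Betti}, which computes $\beta_{i,j}(R/\ISp_{(n-2,2)})=\dim_K V_{(n-i-1,2,1^{i-1})}$ from the Gorenstein property and the Hilbert series, this shows the image of $\partial_i$ already supplies all $\mu(\Ker\partial_{i-1})=\beta_{i,j}$ minimal generators of the $i$-th syzygy module, which lives in a single degree. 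That is where the hypothesis $\chara(K)=0$ actually enters the exactness argument; your proposal never uses it.
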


\section{The case $(n-2,2)$: Proof}

\begin{lem}\label{Gor Betti} 
We have 
$$\beta_i(R/\ISp_{(n-2,2)})=\beta_{i,i+1}(R/\ISp_{(n-2,2)})=\dim_K V_{(n-i-1,2,1^{i-1})}$$
 for all $1\leq i\leq n-3$, and 
$$\beta_{n-2}(R/\ISp_{(n-2,2)})=\beta_{n-2, n}(R/\ISp_{(n-2,2)})=1=\dim_K V_{(1^n)}.$$
\end{lem}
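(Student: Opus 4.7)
The plan is to determine the Betti numbers via a Hilbert series argument combined with the Gorenstein self-duality of $R/\ISp_{(n-2,2)}$ (\cite[Proposition~5.2]{Y}). Since $\ISp_{(n-2,2)}$ is generated in degree~$2$, we have $\beta_{i,j}=0$ for $j<i+1$. Gorenstein self-duality provides a symmetry $\beta_{i,j}=\beta_{n-2-i,\,c-j}$ for a constant $c$; combined with the generation-degree vanishing and $\beta_{0,0}=1$, it forces $c=n$, $\beta_{n-2,n}=1$, and $\beta_{i,j}=0$ for $1\le i\le n-3$ and $j\ne i+1$. Thus only $\beta_{0,0}=1$, $\beta_{i,i+1}$ for $1\le i\le n-3$, and $\beta_{n-2,n}=1$ can be nonzero; in particular, the last statement of the lemma (using $\dim_K V_{(1^n)}=1$) is already verified.

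Next I would compute the Hilbert series of $R/\ISp_{(n-2,2)}$. Cohen--Macaulayness gives $H(R/\ISp_{(n-2,2)},t)=h(t)/(1-t)^2$; Gorensteinness together with the preceding step force $h(t)=1+ct+t^2$, where $c+2$ equals the multiplicity, which is computed from the subspace-arrangement description of the variety of $\ISp_{(n-2,2)}$ in terms of its irreducible components. From the equation $(1-t)^n H(R/\ISp_{(n-2,2)},t)=\sum_{i,j}(-1)^i \beta_{i,j}t^j$ together with the graded positions above, the Betti number $\beta_{i,i+1}$ is then read off as the absolute value of the coefficient of $t^{i+1}$ in $(1+ct+t^2)(1-t)^{n-2}$. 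Finally, a hook length computation identifies this coefficient with $\dim_K V_{(n-1-i,2,1^{i-1})}$.

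The main obstacle is the final combinatorial identity—verifying that the binomial-coefficient expansion of $(1+ct+t^2)(1-t)^{n-2}$ matches the hook length formula for $\dim_K V_{(n-1-i,2,1^{i-1})}$—and, more subtly, pinning down the exact multiplicity $c+2$ from the subspace-arrangement structure. A conceptually cleaner alternative is to exploit the $\mathfrak{S}_n$-equivariance of each $\Tor_i^R(K,R/\ISp_{(n-2,2)})$: exhibit $V_{(n-1-i,2,1^{i-1})}$ as a subrepresentation and then reduce the identification to a character-theoretic identity, bypassing the direct hook length manipulation.
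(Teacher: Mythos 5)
Your proposal is correct and follows essentially the same route as the paper: both arguments exploit the Gorenstein property and the Hilbert series $(1+(n-2)t+t^2)/(1-t)^2$ from \cite[Proposition~5.2]{Y} to force the resolution to be linear except at the two ends, read off $\beta_{i,i+1}$ from the alternating-sum identity (the paper outsources this to \cite[Proposition~5.3.14]{V}, which yields exactly the coefficient of $t^{i+1}$ in $(1+(n-2)t+t^2)(1-t)^{n-2}$), and then match the result against the hook length formula. The only substantive items you defer --- that the $h$-vector is $(1,n-2,1)$ and the final binomial/hook-length identity --- are precisely the computations the paper carries out, and both check out.
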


\begin{proof}
By the hook formula (\cite[Theorem~3.10.2]{Sa}), for all $i$ with $1\leq i \leq n-3$, we have 
\begin{eqnarray*}
\dim_K V_{(n-i-1,2,1^{i-1})} \!\! 
&=& \!\!  \frac{n!}{(n-1)(n-(i-1)-2)(n-(i-1)-4)!(i-1+2)!(i-1)!  } \\
\!\!  &=& \!\!  \frac{n!}{(n-1)(n-i-1)(n-i-3)!(i+1)!(i-1)!  } \\
\!\!  &=& \!\!  \frac{n!(n-i-2)i}{(i+1)!(n-i-1)!(n-1)}.
\end{eqnarray*}
On the other hand, $R/\ISp_{(n-2,2)}$ is a Gorenstein ring with the Hilbert series 
$$\frac{1+(n-2)t+t^2}{(1-t)^2}$$
by \cite[Proposition~5.2]{Y} (see  its proof for the Hilbert series), and we have 
\begin{eqnarray*}
\beta_i (R/\ISp_{(n-2,2)})
&=&\beta_{i,i+1}  (R/\ISp_{(n-2,2)})\\
%&=&
%\binom{2+n-2-1}{2+i-1}\binom{2+i-2}{i-1}
%+\binom{2+n-2-1}{i}\binom{2+n-2-i-2}{2-1}
%-\binom{n-2}{i}\binom{2+n-2-2}{2-1}
&=&
\binom{n-1}{i+1}\binom{i}{i-1}
+\binom{n-1}{i}\binom{n-i-2}{1}
-\binom{n-2}{i}\binom{n-2}{1}\\
&=& \frac{(n-1)!i}{(i+1)!(n-i-2)!}+\frac{(n-1)!(n-i-2)}{i!(n-1-i)!}-\frac{(n-2)!(n-2)}{i!(n-2-i)!}\\
&=& \frac{n!i}{(i+1)!(n-i-2)!n}+\frac{n!(n-i-2)}{i!(n-1-i)!n}-\frac{n!(n-2)!}{i!(n-2-i)!n(n-1)}\\
%&=&\frac{n!(i(n-i-1)(n-1)+(i+1)(n-1)(n-2-i)-(n-2)(i+1)(n-i-1))}{(i+1)!(n-i-1)!n(n-1)}\\
&=&\frac{n!(n-i-2)i}{(i+1)!(n-i-1)!(n-1)}
\end{eqnarray*}
for all $i$ with $1\leq i \leq n-3$.  
Here we use \cite[Proposition~5.3.14]{V} (note that  $\ISp_{(n-2,2)}$ is a Gorenstein ideal generated by quadrics and $\height(\ISp_{(n-2,2)})=n-2$). 
So we get the first equation. 

The second one is easy. 
\end{proof}

\begin{thm}\label{wdGor}
The maps $\partial_i$ ($1 \le i \le n-3$) defined in the previous section %of $\cF_\bullet^{(n-2,2)}$ 
are well-defined.
\end{thm}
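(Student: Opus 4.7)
The strategy is to first extend $\partial_i$ linearly to a map $\widetilde\partial_i:K\Tab(\lambda)\otimes_K R \to V_\mu\otimes_K R$ defined on the free module on all tableaux, where $\lambda=(n-1-i,2,1^{i-1})$ and $\mu=(n-i,2,1^{i-2})$, and then invoke Proposition~\ref{relation generated}: the map $\widetilde\partial_i$ factors through $V_\lambda\otimes R$ if and only if it annihilates every formal Garnir sum $\sum_{\sigma\in S(A,B)}\sgn(\sigma)\,\sigma T\otimes 1$. For shape $\lambda$ the minimal Garnir relations fall into the four families discussed just before Example~\ref{Garnir example}: (i) $A=\{a_1,\ldots,a_{i+1}\}$, $B=\{b_1\}$; (ii) $A=\{a_2,\ldots,a_{i+1}\}$, $B=\{b_1,b_2\}$; (iii) $A=\{b_1,b_2\}$, $B=\{c_1\}$; and (iv) $A=\{c_k\}$, $B=\{c_{k+1}\}$ for some $k$. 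These generate all Garnir relations, so checking the four suffices.

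Families (iii) and (iv) are immediate: every $\sigma\in S(A,B)$ fixes column 1 pointwise, so $\widetilde\partial_i$ distributes through the Garnir sum and, inside each slot $\otimes\,x_{a_j}$, produces a Garnir sum of the same type on the target tableau $T_j\in\Tab(\mu)$, which vanishes in $V_\mu$. Family (ii) splits in two. The $j=1$ term of $\partial_i$ pulls out the untouched entry $a_1$, and the resulting inner sum is exactly a Garnir relation on $T_1\in\Tab(\mu)$ between all of its column 1 (length $i$) and its column 2 (length $2$); since $|A|+|B|=i+2>i$ this is a legitimate Garnir and vanishes. The $j\ge 2$ terms, grouped by the entry $v=\sigma(a_j)$ moved to the end of row 1, collapse to further Garnir sums in $V_\mu$ and likewise vanish.

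Case (i) is the substantive one and the main obstacle. Set $U=\{a_1,\ldots,a_{i+1},b_1\}$; each $\sigma\in S(A,B)$ is determined by $u=\sigma(b_1)\in U$, and the remaining entries of $U$ fill column 1 in increasing order. Applying $\widetilde\partial_i$ produces, for each ordered pair $(u,v)\in U^2$ with $u\ne v$, a term proportional to $e(T^{u,v})\otimes x_v$, where $T^{u,v}\in\Tab(\mu)$ has $u$ at position $b_1$, $v$ at the end of row 1, and $U\setminus\{u,v\}$ filling column 1 in increasing order. Grouping by $v$, I would show that for each fixed $v$ the inner sum $\sum_{u\in U\setminus\{v\}}c_{u,v}\,e(T^{u,v})$ agrees, up to an overall sign, with the Garnir sum in $V_\mu$ associated to $T^{u_0,v}$ (for any chosen reference $u_0$) with $A=$ column 1 of $T^{u_0,v}$ and $B=\{u_0\}$, hence vanishes. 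The remaining work is the sign bookkeeping: writing $U=\{u_1<\cdots<u_{i+2}\}$, each of the three relevant signs---the source Garnir sign with $\sigma(b_1)=u_k$, the sign $(-1)^{j-1}$ in the definition of $\partial_i$ with $j$ the position of $v$ in the rearranged column 1, and the target Garnir sign matching $T^{u_k,v}$ to the reference tableau---factors into an explicit cyclic contribution in $k$, and the required identity reduces to an elementary combinatorial check.
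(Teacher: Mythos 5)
Your proposal is correct and follows essentially the same route as the paper: reduce well-definedness to the vanishing of $\widetilde\partial_i$ on the formal Garnir sums via Proposition~\ref{relation generated}, split each sum into its $x_v$-parts, and identify each part (after an explicit re-indexing of the permutations) with a Garnir relation for the target tableau in $V_{(n-i,2,1^{i-2})}$; what you defer as ``sign bookkeeping'' is exactly the content of the paper's explicit bijections $f$, $f'$, $f_1$, $f_2$, $f''_1$, $f''_2$, $\widetilde f$. The one genuine divergence is your treatment of the $x_{a_1}$-part in the case $A=\{a_2,\ldots,a_{i+1}\}$, $B=\{b_1,b_2\}$: since every $\sigma\in S(A,B)$ fixes $a_1$, one has $(\sigma T)_1=\sigma(T_1)$, so that part is literally $g_{A,B}e(T_1)\otimes x_{a_1}$ with $A\cup B$ the full first and second columns of $T_1$ (of sizes $i$ and $2$, so $|A\cup B|=i+2>i$ makes this a legitimate Garnir relation); this one-line observation replaces the paper's page-long split into the subcases $\sigma(b_2)=a_{i+1}$ and $\sigma(a_{i+1})=a_{i+1}$ with the auxiliary tableaux $\widetilde T$ and $\overline T$, and is a genuine simplification.
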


The proof of this lemma is elementary, but (therefore?) long and technical. 
By the purpose of the present paper, we do not skip the details. 
Example~\ref{sigma-tau} below, which explain a few details of the proof, must be helpful for better understanding.

Note that an element $\varphi \in V_\lambda \otimes_K R_1$ is uniquely written as $\sum_{i=1}^n v_i \otimes x_i$ ($v_i \in V_\lambda$). We call $v_i \otimes x_i$ the {\it $x_i$-part} of $\varphi$.  

\begin{proof}
The well-definedness of $\partial_1$ is nothing other than that of the map \eqref{Isom}.  So we assume that $2 \le i \le n-3$. By Proposition~\ref{relation generated}, it suffices to show that 
\begin{equation}\label{quasi Garnir}
\sum_{\sigma\in S_T(A,B)} \sgn(\sigma) \,\partial_i(e(\sigma T)\otimes 1)=0
\end{equation} 
%To show this, we prove that the left side of \eqref{quasi Garnir} is reduced to other Garnir elements, since \cite[Proposition~2.6.3]{Sa}.
for $T\in \Tab(n-1-i,2,1^{i-1})$. Let $T$ be as in \eqref{T for Gor}. 
Then there are three types of $S_T(A,B)$.

{\it Case 1.} When $A=\{b_1, b_2\}$ and $B=\{c_1\}$, or  $A=\{c_l \}$ and $B=\{c_{l+1}\}$: The left side of \eqref{quasi Garnir} can be decomposed to the sum of the $x_{a_1}$-part, \ldots , 
and the $x_{a_{i+1}}$-part. Since $g_{A,B}$ is  a Garnir element also for $T_j$, the $x_{a_j}$-part of the left side of \eqref{quasi Garnir} is
\begin{eqnarray*}
\displaystyle \sum_{\sigma\in S(A,B)} (-1)^{j-1}\sgn(\sigma) \, e((\sigma T)_j) \otimes x_{a_j}&=&\displaystyle (-1)^{j-1}\sum_{\sigma\in S(A,B)} \sgn(\sigma) \, e(\sigma(T_j)) \otimes x_{a_j}\\
&=&\displaystyle (-1)^{j-1}g_{A,B}  e(T_j) \otimes x_{a_j}=0. 
\end{eqnarray*}
Hence \eqref{quasi Garnir} holds in this case.

{\it Case 2.} When $A=\{a_1,\ldots,a_{i+1}\}$ and $B=\{b_1\}$: The left side of \eqref{quasi Garnir} can be decomposed to the sum of the $x_{a_1}$-part, \ldots, the $x_{a_{i+1}}$-part, and the $x_{b_1}$-part. 
To treat the Garnir relation, we may assume that $b_1<a_1<a_2<\cdots<a_{i+1}$.

Fix an integer $j$ with $1\le j \le i+1$, and set $A_j:=A\backslash \{a_j\}$.
Note that 
\begin{eqnarray*}
S(A,B)=\{ \sigma\in S(A,B) \, | \, \sigma(a_j)=a_{j}\} \! \! \! &\sqcup& \! \! \!
\{\sigma\in S(A,B)\, |\,   \sigma(a_{j+1})= a_{j}\} \\
& \sqcup&   \! \! \!  \{\sigma\in S(A,B)\, |\,  \sigma(b_1)= a_{j}\}
\end{eqnarray*}
For $\sigma\in S(A,B)$, $ \sigma(a_j)=a_{j}$ if and only if $\sigma(b_1)\leq a_{j-1}$. 
Similarly, $\sigma(a_{j+1})= a_{j}$ if and only if $\sigma(b_1)\geq a_{j+1}$.

If  $\sigma(a_j) =a_j$, then $\sigma$ also belongs to $S_{T_j}(A_j,B)$, and we have 
$$\{\sigma \in S(A,B)\mid \sigma(a_j)=a_j\}= \{\sigma \in S_{T_j}(A_j,B)\mid \sigma(b_1)\leq a_{j-1}\},$$ and $\sigma(T_j)=(\sigma T)_{j}$. (For notational simplicity, we will write $S(A_j, B)$ for $S_{T_j}(A_j,B)$ below.)

If  $\sigma(a_{j+1}) =a_j$, then we have $\tau:= (\sigma(a_j) \ a_j) \cdot \sigma \in S(A_j, B)$. 
In fact, 
$$\tau(k) = \begin{cases}
a_j & (k=a_j) \\
\sigma(a_j) & (k=a_{j+1}) \\
\sigma(k) & (k \ne a_j, a_{j+1}), 
\end{cases}$$
and hence $\tau$ only moves elements in $A_j \cup B$, and $\tau(k) < \tau(l)$ for $k, l \in A_j$ with $k<l$. We also have $\tau(b_1)=\sigma(b_1) \ge a_{j+1}$, $\sgn(\tau)=-\sgn(\sigma)$, and 
\begin{equation}\label{tau-sigma}
\tau(T_j)=(\sigma T)_{j+1}. 
\end{equation}
(In Example~\ref{sigma-tau} (1) below, we will check \eqref{tau-sigma} very carefully to get the feeling. 
From the next paragraph, we will leave similar computations to the reader as easy exercises.)  
Moreover, the map 
$$
f : \{ \sigma \in S(A,B) \mid \sigma(a_{j+1}) = a_j\} \too \{ \tau \in S(A_j,B) \mid \tau(b_1) \ge a_{j+1} \}
$$
defined by the above operation is bijective. In fact, the inverse map is given by 
$S(A_j, B) \ni \tau \longmapsto (a_j \ \tau(a_{j+1})) \cdot \tau$.

Hence the $x_{a_j}$-part of the left side of \eqref{quasi Garnir} is 
\begin{eqnarray*}
%&& \left\{ (-1)^{j-1}e(T_j) +\sum_{ \substack{\sigma\in S(A,B) \\ a_1\leq \sigma(b_1)\leq a_{j-1}}}(-1)^{j-1}\sgn(\sigma)e((\sigma T)_{j})  +\sum_{\substack{\sigma\in S(A,B)\\ \sigma(b_1)\geq a_{j+1}} }(-1)^{j}\sgn(\sigma)e((\sigma T)_{j+1})) \right\} \otimes x_{a_j}\\
&& \left (%(-1)^{j-1}e(T_j) +
\sum_{ \substack{\sigma\in S(A,B) \\ \sigma(a_j)=a_{j}}}(-1)^{j-1}\sgn(\sigma)e((\sigma T)_{j})  +\sum_{\substack{\sigma\in S(A,B)\\ \sigma(a_{j+1})= a_{j}} }(-1)^{j}\sgn(\sigma)e((\sigma T)_{j+1})) \right) \otimes x_{a_j}\\
&=& \left( \sum_{ \substack{\sigma \in S(A_j,B) \\ \sigma(b_1)\leq a_{j-1}}}(-1)^{j-1}\sgn(\sigma)e(\sigma (T_j)) +\sum_{\substack{\tau\in S(A_j,B) \\ \tau(b_1)\geq a_{j+1} } }(-(-1)^{j}\sgn(\tau)e(\tau (T_{j})) ) \right) \otimes x_{a_j}\\
&=&(-1)^{j-1}g_{A_j,B}e(T_j)\otimes  x_{a_j} \\
&=& 0.
\end{eqnarray*}
Here the last equality follows from that  $g_{A_j,B}$ is a Garnir element for $T_j$. 

It remains to check the $x_{b_1}$-part of the left side of \eqref{quasi Garnir}. Consider the tableau $T':=  ((a_1\, \, b_1)T)_1$, 
that is,  
$$
T'= 
\ytableausetup
{mathmode, boxsize=2em}
\begin{ytableau}
a_2 & a_1 & c_1& c_2 & \cdots & b_1   \\
a_3 & b_2    \\
%a_3\\
\vdots \\
a_{i+1}
\end{ytableau}.
$$
Set $A':=A\backslash \{a_1\}$ and  $B':=\{a_1\}$, and consider the map 
$$
f':\{ \sigma \in S(A,B) \mid \sigma(a_1)=b_1\} \ni \sigma \longmapsto  (\sigma(b_1) \ b_1)\sigma \in S_{T'}(A',B').
$$
Of course, we have to check that $\tau := f'(\sigma)$ actually belongs  $ S_{T'}(A',B')$, but it follows from the fact that 
$$
\tau(k) = \begin{cases}
b_1 & (k=b_1) \\
\sigma(b_1) & (k=a_1) \\
\sigma(k) & (k \ne a_1, b_1). 
\end{cases}
$$
Moreover, $f'$ is bijective.  In fact, the inverse map is given by $ S_{T'}(A',B') \ni \tau \longmapsto (\tau(a_1) \ b_1) \, \tau$. 
 We also remark that $\sgn(\sigma)=-\sgn(f'(\sigma))$ and $(\sigma T)_1=f'(\sigma)T'$. 
Hence the $x_{b_1}$-part of the left side of \eqref{quasi Garnir} is 
$$
\Bigl(\sum_{\substack{\sigma\in S(A,B)\\ \sigma(a_1)=b_1}}\sgn(\sigma)(\sigma T)_{1}  ) \Bigr) \otimes x_{b_1}
=-g_{A',B'}e(T')\otimes x_{b_1}
=0.$$

Below, we will use bijections  similar to $f, f'$ repeatedly. Each time, we will define the bijections explicitly,  but we will not check they work well, and leave them as easy exercises.   

{\it Case 3.}  When $A:=\{a_2,\ldots,a_{i+1}\}$ and $B:=\{b_1, b_2\}$:  
Note that the left side of \eqref{quasi Garnir} can be decomposed to the sum of the $x_{a_1}$-part, \ldots, the $x_{a_{i+1}}$-part, the $x_{b_1}$-part and the $x_{b_2}$-part. 
Fix $j$ with $2 \le j \le i+1$, and  set $A_j:=A\backslash \{a_j\}$.  To treat the Garnir relation, we may assume that 
 $b_1<b_2<a_2<\cdots<a_{i+1}$. 
 
First, we treat the $x_{a_j}$-part for $j \ge 2$.  Set
\begin{eqnarray*}
G_1&:=&\{\sigma\in S(A,B) \mid   a_j<\sigma(b_1) \}, \\
G_2&:=&\{\sigma\in S(A,B) \mid  \sigma(b_1)<a_j<\sigma(b_2)\} \\
G_3&:=&\{\sigma\in S(A,B) \mid   a_j>\sigma(b_2)\}, \\
\end{eqnarray*}
and
\begin{eqnarray*}
G'_1&:=&\{\sigma\in S(A_j,B) \mid  a_j<\sigma(b_1)\}, \\
G'_2&:=&\{\sigma\in S(A_j,B) \mid \sigma(b_1)<a_j<\sigma(b_2)\},\\
G'_3&:=&\{\sigma\in S(A_j,B) \mid  a_j>\sigma(b_2)\},  \\
\end{eqnarray*}
Then we have 
$$\{ \sigma \in S(A,B) \mid \sigma(b_1), \sigma(b_2) \ne a_j \} = 
G_1\sqcup G_2\sqcup G_3$$
and  
$$S(A_j,B)=G'_1 \sqcup G'_2\sqcup G'_3.$$

For $\sigma\in S(A,B)$, $\sigma\in G_1$ if and only if $\sigma(a_{j+2})=a_j$. 
Similarly, if $\sigma \in G_2$ (resp. $\sigma \in G_3$), then  $\sigma(a_{j+1})=a_j$ (resp. $\sigma(a_j)=a_j$). 
Hence we have $G_3=G'_3$. 
Moreover, we have the following bijections 
\begin{eqnarray*}
&& f_1: G_1 \ni \sigma \longmapsto (a_j \, \sigma(a_{j+1})\, \sigma(a_{j}))\sigma \in G'_1\\
&& f_2: G_2 \ni \sigma \longmapsto  (a_j \, \sigma(a_{j}))\sigma \in G'_2. \\
%&& f_3= \Id_{G_3} 
\end{eqnarray*}  
Clearly, $\sgn(f_1(\sigma))=\sgn(\sigma)$ and $\sgn(f_2(\sigma)) =-\sgn(\sigma)$. 
We also remark that $(\sigma T)_{j+2}= f_1(\sigma) (T_j)$ for $\sigma \in G_1$, $(\sigma T)_{j+1}= f_2(\sigma) (T_j)$ for $\sigma \in G_2$.  
For simplicity, set $\tau:=f_k(\sigma)$ for $\sigma \in G_k$. 
By these bijections, we see that the  $x_{a_j}$-part of  the left side of \eqref{quasi Garnir} for $j \ge 2$ is 
\begin{eqnarray*}
%&&\left\{ (-1)^{j-1}e(T_j) +\sum_{\sigma\in S(A,B)}(-1)^{j-1}\sgn(\sigma)(\sigma T)_{j}  +\sum_{\sigma\in S(A,B)}(-1)^{j}\sgn(\sigma)(\sigma T)_{j+1}) \right\} \otimes x_{a_j}\\
&&\Bigl( \sum_{\sigma \in G_1}(-1)^{j+2-1}\sgn(\sigma)e((\sigma T)_{j+2})  +\sum_{\sigma\in G_2}(-1)^{j+1-1}\sgn(\sigma)e((\sigma T)_{j+1}) \\
&& \qquad \qquad \qquad \qquad \qquad \qquad  \qquad \qquad +\sum_{\sigma\in G_3}(-1)^{j-1}\sgn(\sigma)e((\sigma T)_{j})  \Bigr) \otimes x_{a_j} \\
&=& (-1)^{j-1}\Bigl( \sum_{\tau\in G'_1}\sgn(\tau)e(\tau (T_j))  +\sum_{\tau\in G'_2}\sgn(\tau)e(\tau (T_j))\\
&& \qquad \qquad \qquad  \qquad \qquad \qquad \qquad \qquad  +\sum_{\sigma \in G'_3=G_3}\sgn(\sigma)e(\sigma (T_j)) \Bigr) \otimes  x_{a_j} \\
&=&(-1)^{j-1}g_{A_j,B}e(T_j) \otimes x_{a_j}\\
&=&0.
\end{eqnarray*}

Next we check the $x_{a_1}$-part. We decompose the $x_{a_1}$-part of the left side of \eqref{quasi Garnir} as follows
\begin{equation}\label{a_1 part}
\left( \sum_{\substack{\sigma\in S(A,B)\\ \sigma(b_2)=a_{i+1} }}\sgn(\sigma)e((\sigma T)_{1}) + \sum_{\substack{\sigma\in S(A,B)\\ \sigma(a_{i+1})=a_{i+1} }}\sgn(\sigma)e((\sigma T)_{1}) \right )\otimes x_{a_1}. 
\end{equation}

First, we consider the former, that is, the case $\sigma(b_2)=a_{i+1}$.
Set $\widetilde{A}=(A\cup \{b_2\})\backslash \{a_{i+1}\}$, $\widetilde{B}=\{b_1\}$, and
$\widetilde{T}:=(a_{i+1} \, \, a_i\, \, a_{i-1}\, \, a_{i-2}\, \, \cdots a_3\, \, a_2\, \, b_2)(T_1).$
Note that 
\begin{equation*}
\widetilde{T}= 
\ytableausetup
{mathmode, boxsize=2em}
\begin{ytableau}
b_2 & b_1 & c_1& c_2 & \cdots & a_1   \\
a_2 & a_{i+1}    \\
%a_3\\
\vdots \\
a_{i}
\end{ytableau}.
\end{equation*}
%%%
We have  a bijection 
$$
\begin{array}{ccc}
\widetilde{f}:\{\sigma\in S(A,B) \mid \sigma(b_2)=a_{i+1}\}   & \longrightarrow &S_{\widetilde{T}}(\widetilde{A},\widetilde{B})                    \\%[0pt]
\rotatebox{90}{$\in$} & & \rotatebox{90}{$\in$} \\
\sigma                   & \longmapsto     & (a_{i+1}  \; \sigma(a_2)\; \sigma(a_3) \; \cdots \; \sigma(a_{i+1}))\sigma
\end{array}
$$
and can easily check that $(\sigma T)_1=\widetilde{f}(\sigma) \widetilde{T}$. For simplicity, set $\tau:=f(\sigma)$. 
Then  we have 
$$\sum_{\substack{\sigma\in S(A,B)\\ \sigma(b_2)=a_{i+1} }}\sgn(\sigma)e((\sigma T)_{1}) 
=(-1)^i\sum_{\substack{\tau\in S_{\widetilde{T}}(\widetilde{A},\widetilde{B}) }}\sgn(\tau)e(\tau \widetilde{T}) =(-1)^i g_{\widetilde{A},\widetilde{B}}e(\widetilde{T})=0. 
$$

Next, we consider the case $\sigma(a_{i+1})=a_{i+1}$ in \eqref{a_1 part}.
Set $\overline{A}=A\backslash \{a_{i+1}\}$ and
%$\overline{T}:=(a_{i+1}\, \, a_i \cdots a_4\, \, a_3 \, \, a_2)(T_1)$. Note that 
$$
\overline{T}:= 
\ytableausetup
{mathmode, boxsize=2em}
\begin{ytableau}
a_{i+1} & b_1 & c_1& c_2 & \cdots & a_1   \\
a_2 &  b_2    \\
a_3\\
\vdots \\
a_{i}
\end{ytableau}.
$$
%%% 
We have
$$\{ \sigma \in S(A,B) \mid \sigma(a_{i+1})=a_{i+1}\}=S_{\overline{T}}(\overline{A},B),$$
and we have 
$$(\sigma T)_1 = (\sigma(a_2) \ \sigma(a_3) \ \cdots \sigma(a_i) \ a_{i+1}) \, \sigma \overline{T}$$
for $\sigma \in S(A,B)$ with $\sigma(a_{i+1})=a_{i+1}$. Hence $e((\sigma T)_1)=(-1)^{i-1} e( \sigma \overline{T})$ and 
\begin{eqnarray*}
\sum_{\substack{\sigma\in S(A,B)\\ \sigma(a_{i+1})=a_{i+1} }}\sgn(\sigma)e((\sigma T)_{1}) 
&=&(-1)^{i-1}\sum_{\sigma \in S_{\overline{T}}(\overline{A},B)}\sgn(\sigma)e(\sigma  \overline{T}) \\
&=&(-1)^{i-1} g_{\overline{A},B}e(\overline{T})\\
&=&0.
\end{eqnarray*}

It remains to check the $x_{b_1}$-part and the $x_{b_2}$-part of  the left side of \eqref{quasi Garnir}.  
Set $A':=A\backslash \{a_2\}$, $B':=\{b_2,a_2\}$ and $T':= ((b_1\, \, b_2\, \, a_2)T)_2$. Note that  
\begin{equation*}
T'= 
\ytableausetup
{mathmode, boxsize=2em}
\begin{ytableau}
a_1 & b_2 & c_1& c_2 & \cdots & b_1   \\
a_3 & a_2    \\
%a_3\\
\vdots \\
a_{i+1}
\end{ytableau}.
\end{equation*}
We have  a bijection 
 $$f':\{\sigma\in S(A,B)\mid  \sigma(a_2)=b_1 \}\ni \sigma \longmapsto (\sigma(b_2)\, \, \sigma(b_1)\, \, b_1)\sigma \in S_{T'}(A',B'),$$ and then $(\sigma T)_2 = f'(\sigma)(T')$. 
Hence the $x_{b_1}$-part of the left side of \eqref{quasi Garnir} is 
\begin{eqnarray*}
\left(-\sum_{\substack{\sigma\in S(A,B)\\ \sigma(a_2)=b_1}}\sgn(\sigma)e((\sigma T)_{2})  ) \right) \otimes x_{b_1}
%%%
\begin{comment}
&=& -\left(\sum_{\substack{\sigma\in G'_1}}\sgn(\sigma)e((\sigma T)_{2})  )
+\sum_{\substack{\sigma\in G'_2}}\sgn(\sigma)e((\sigma T)_{2})  ) \right) \otimes x_{b_1} \\
&=& -\left(\sum_{\substack{\tau\in G'_3}}\sgn(\tau)e((\tau T)_{2})  )
+\sum_{\substack{\tau\in G'_4}}\sgn(\tau)e((\sigma T)_{2})  ) \right) \otimes x_{b_1} \\
\end{comment}
=-g_{A',B'}e(T')\otimes x_{b_1}=0.
\end{eqnarray*}

Set $A'':=A\backslash \{a_2\}$, $B'':=\{b_1,a_2\}$ and $T'':= ((b_2\, \, a_2)T)_2$. Note that 
\begin{equation*}
T''= 
\ytableausetup
{mathmode, boxsize=2em}
\begin{ytableau}
a_1 & b_1 & c_1& c_2 & \cdots & b_2   \\
a_3 & a_2    \\
%a_3\\
\vdots \\
a_{i+1}
\end{ytableau}.
\end{equation*}
%%%f
Set
\begin{eqnarray*}
L_1&=& \{\sigma \in S(A,B) \, \mid \, \sigma(a_2)=b_2\},\\
L_2&=& \{\sigma \in S(A,B) \, \mid \, \sigma(a_3)=b_2\},
\end{eqnarray*}
and 
\begin{eqnarray*}
L'_1&=& \{\tau \in S_{T''}(A'',B'') \, \mid \, \tau(b_1)=b_1\},\\
L'_2&=& \{\tau \in S_{T''}(A'',B'') \, \mid \, \tau(a_3)=b_1\}.
\end{eqnarray*}
If $\sigma\in L_1$ (resp. $\sigma\in L_2$), then $\sigma(b_1)=b_1$ (resp. $\sigma(a_2)=b_1$).
We also have
$$\{\sigma \in S(A,B) \mid \sigma(b_1), \sigma(b_2)\neq b_2\}=L_1\sqcup L_2$$
and 
$$S_{T''}(A'',B'')=L'_1\sqcup L'_2.$$

We have bijections 
$$f_1'':L_1 \ni \sigma \longmapsto (\sigma(b_2)\, \, b_2)\sigma \in L'_1$$ 
and  
$$f_2'':L_2 \ni \sigma \longmapsto (\sigma(b_2)\, \, b_2\, \, b_1)\sigma \in L'_2.$$
Note that $(\sigma T)_2 = f''_1(\sigma)(T'')$ for $\sigma \in L_1$, and $(\sigma T)_3 = f_2''(\sigma)(T'')$ for $\sigma \in L_2$. As before, set $\tau:= f_k''(\sigma)$ for $\sigma \in L_k$.   
Then the $x_{b_2}$-part of the left side of \eqref{quasi Garnir} is
\begin{eqnarray*}
&&\left(\sum_{\substack{\sigma\in L_1}}(-1)^{2-1}\sgn(\sigma)e((\sigma T)_{2})  +\sum_{\substack{\sigma\in L_2}}(-1)^{3-1}\sgn(\sigma)e((\sigma T)_{3}) \right) \otimes x_{b_2}\\
&=&\left(\sum_{\substack{\tau\in L'_1}}\sgn(\tau)e(\tau T'')  +\sum_{\substack{\tau\in L'_2 }}\sgn(\tau)e(\tau T'') \right) \otimes x_{b_2}\\
&=&g_{A'',B''}e(T'') \otimes x_{b_2}.
\end{eqnarray*}

So we are done.
\end{proof}

\begin{ex}\label{sigma-tau}
(1) Here we will check \eqref{tau-sigma} step by step.  
For simplicity, set $j=2$ (so $\sigma(a_3)=a_2$ now). Then we have 
$$
\tau(k)=\begin{cases}
a_2 & \text{if $k=a_2$,} \\
\sigma(a_2)  & \text{if $k=a_3$,} \\
\sigma(k) & \text{otherwise,}
\end{cases}
$$
$$T_2= 
\ytableausetup
{mathmode, boxsize=1.8em}
\begin{ytableau}
a_1 & b_1 &  c_1 & \cdots & a_2   \\
a_3 & b_2   \\
a_4\\
%a_3\\
\vdots \\
\end{ytableau}
$$
and
$$
\tau(T_2)= \ytableausetup
{mathmode, boxsize=2.7em}
\begin{ytableau}
\tau(a_1) & \tau(b_1) &  \tau(c_1) & \cdots & \tau(a_2)   \\
\tau(a_3) & \tau(b_2)   \\
\tau(a_4)\\
%a_3\\
\vdots \\
\end{ytableau}
= \ytableausetup
{mathmode, boxsize=2.7em}
\begin{ytableau}
\sigma(a_1) & \sigma(b_1) &  c_1 & \cdots & a_2   \\
\sigma(a_2) & b_2   \\
\sigma(a_4)\\
%a_3\\
\vdots \\
\end{ytableau}.
$$
Since 
$$
\sigma T = \ytableausetup
{mathmode, boxsize=2.7em}
\begin{ytableau}
\sigma(a_1) & \sigma(b_1) &  c_1 & \cdots \\
\sigma(a_2) & b_2   \\
a_2 \\
\sigma(a_4)\\
%a_3\\
\vdots \\
\end{ytableau},
$$
we have $\tau(T_2)=(\sigma T)_3$. 

%If one wants to check each detail of the proof, similar diagrams are very helpful. 

(2) For the tableau $T$ of \eqref{T example} in Example~\ref{Garnir example}, we will check that the $x_1$-part of  
$$\sum_{\sigma \in S(A,B)}\partial_2(e(T)\otimes 1)$$
is 0 for $A=\{4,5\}$ and $B=\{2,3\}$.  The $x_1$-part is 
$$
\Bigg(
e(\ytableausetup{mathmode, boxsize=1.3em}
\begin{ytableau}
4 &2 & 6& 1\\
5 & 3   \\
\end{ytableau})
- e(
\begin{ytableau}
3 &2 & 6& 1\\
5 & 4    \\
\end{ytableau})
+
e(\begin{ytableau}
3 &2 & 6& 1\\
4 & 5    \\
\end{ytableau})
-e(
\begin{ytableau}
2 &3 & 6 & 1\\
4 & 5    \\
\end{ytableau})$$
$$\qquad \qquad \qquad \qquad+e(
\begin{ytableau}
2 &3 & 6 & 1\\
5& 4    \\
\end{ytableau})
+e(
\begin{ytableau}
2 &4 & 6& 1\\
3 & 5    \\
\end{ytableau})  \Biggr)\otimes x_1,
$$
but we have 
$$
e(\begin{ytableau}
3 &2 & 6 & 1\\
4 & 5    \\
\end{ytableau})
-e(
\begin{ytableau}
2 &3 & 6 & 1\\
4 & 5    \\
\end{ytableau})
+e(
\begin{ytableau}
2 &4 & 6 & 1\\
3 & 5    \\
\end{ytableau})=0
$$
and
$$
e(
\begin{ytableau}
4 &2 & 6 & 1\\
5 & 3   \\
\end{ytableau})
-e(
\begin{ytableau}
3 &2 & 6 & 1\\
5 & 4    \\
\end{ytableau})
+e(
\begin{ytableau}
2 &3 & 6 & 1\\
5& 4    \\
\end{ytableau})=0. 
$$
In fact, we can get the former (resp. latter) applying the Garnir element $g_{A,B}$ for $A=\{3,4\},B=\{2\}$ 
 (resp. $A=\{4\},B=\{2,3\}$) to 
 $$
 \begin{ytableau}
3 &2 & 6 & 1\\
4 & 5    \\
\end{ytableau}
 \qquad \qquad \Big(\, \text{resp.} \ 
 \begin{ytableau}
 5 &2 & 6 & 1\\
4 & 3   \\
\end{ytableau} \, \Bigr).$$
Recall that 
$$e(
\begin{ytableau}
4 &2 & 6 & 1\\
5 & 3   \\
\end{ytableau})
=- e(
\begin{ytableau}
 5 &2 & 6 & 1\\
4 & 3   \\
\end{ytableau} 
),
$$
and the same is true for the related tableaux. 
\end{ex}

%\medskip

\noindent{\it The proof of Theorem~\ref{1st thm}.}
First, we will show that $\cF_\bullet^{(n-2,2)}$ is a chain complex. 
For the tableau $T$ of \eqref{T for Gor} and any permutation $\sigma$ of $\{c_1, c_2, \ldots \}$, we have $e(T)= e(\sigma(T))$. Hence it is easy to see that $\partial_{i-1} \partial_i=0$ holds for $3 \le i \le n-3$. For 
\begin{equation}\label{3 rows}
T= 
\ytableausetup
{mathmode, boxsize=1.5em}
\begin{ytableau}
a_1 & b_1 & c_1& c_2 & \cdots    \\
a_2 & b_2    \\
a_3
\end{ytableau}
\end{equation}
we have 
$$
T_1= 
\ytableausetup
{mathmode, boxsize=1.5em}
\begin{ytableau}
a_2 & b_1 & c_1& c_2 & \cdots    \\
a_3 & b_2    
\end{ytableau},
\qquad 
T_2= 
\ytableausetup
{mathmode, boxsize=1.5em}
\begin{ytableau}
a_1 & b_1 & c_1& c_2 & \cdots    \\
a_3 & b_2    \\
\end{ytableau}, 
\qquad 
T_3= 
\ytableausetup
{mathmode, boxsize=1.5em}
\begin{ytableau}
a_1 & b_1 & c_1& c_2 & \cdots    \\
a_2 & b_2    \\
\end{ytableau}
$$
 and 
\begin{eqnarray*}
\partial_1 \partial_2(e(T) \otimes 1) &=& \partial_1(e(T_1) \otimes x_{a_1}-e(T_2) \otimes x_{a_2} + e(T_3) \otimes x_{a_3})  \\
&=& x_{a_1} f_{T_1} -x_{a_2}f_{T_2}+x_{a_3}f_{T_3}    \\
&=& (x_{a_1}(x_{a_2}-x_{a_3})-x_{a_2}(x_{a_1}-x_{a_3})+x_{a_3}(x_{a_1}-x_{a_2}))(x_{b_1}-x_{b_2})   \\
&=& 0,\nonumber
\end{eqnarray*}
so $\partial_1 \partial_2=0$. Finally, we will show that $\partial_{n-3} \partial_{n-2}=0$. 
Let $T \in \Tab(1^n)$ be the tableau in \eqref{T for Gor last}. Then $\partial_{n-3} \partial_{n-2}(e(T) \otimes 1)$ equals 
$$
\sum_{1 \le j < k < l \le n}
(-1)^{j+k+l}\biggl(
e \Bigl ( \, 
\ytableausetup{mathmode, boxsize=1.3em}
\begin{ytableau}
\vdots & j & l\\
\vdots & k\\
\vdots \\
\vdots 
\end{ytableau} \, \Bigr )
- 
e  \Bigl(\, 
\begin{ytableau}
\vdots & j & k\\
\vdots & l\\
\vdots \\
\vdots 
\end{ytableau} \,  \Bigr )
+ 
e  \Bigl(\, 
\begin{ytableau}
\vdots & k & j\\
\vdots & l\\
\vdots \\
\vdots 
\end{ytableau} \,  \Bigr ) 
\biggr ) 
\otimes x_{j}x_{k} x_{l}, 
$$
where all of the first columns of the above three tableaux are the ``transpose'' of $$\ytableausetup
{boxsize=2.5em}\begin{ytableau}
1 & 2 & \cdots & j-1 & j+1 & \cdots  & k-1 & k+1 & \cdots  & l-1 & l+1 &\cdots & n \end{ytableau}.$$
However,  we have 
$$
\ytableausetup
{boxsize=1.3em}
e \Bigl ( \, \begin{ytableau}
\vdots & j & l\\
\vdots & k\\
\vdots \\
\vdots 
\end{ytableau} \, \Bigr )
- 
e  \Bigl(\, 
\begin{ytableau}
\vdots & j & k\\
\vdots & l\\
\vdots \\
\vdots 
\end{ytableau} \,  \Bigr )
+ 
e  \Bigl(\, 
\begin{ytableau}
\vdots & k & j\\
\vdots & l\\
\vdots \\
\vdots 
\end{ytableau} \,  \Bigr )
= 0$$
by the Garnir relation. 
Hence $\partial_{n-3} \partial_{n-2}(e(T) \otimes 1)=0$, and $\partial_{n-3} \partial_{n-2}=0$. 
So we have shown that $\cF_\bullet^{(n-2,2)}$ is a chain complex.  Since $\Image \partial_1=\ISp_{(n-2,2)}$, $\cF_\bullet^{(n-2,2)}$ is a subcomplex of  
a minimal free resolution of $R/\ISp_{(n-2,2)}$.

Recall that we regard $F_i$ as an $\fS_n$-module by $\sigma (v \otimes f) := \sigma v \otimes \sigma f \in  V_\lambda \otimes_K R(-j) =F_i$, where $\lambda$ is a suitable partition of  $n$ and 
$j$ is a suitable integer uniquely determined by $i$. In the previous section, we have shown that  $\partial_i :F_i \to F_{i-1}$ is an $\fS_n$-homomorphism.  The  restriction 
$$[\partial_i]_j : [F_i]_j =V_\lambda \otimes_K [R(-j)]_j = V_\lambda \otimes_K K \too V_{\lambda'} \otimes_K R_l = [F_{i-1}]_j$$ is also an $\fS_n$-homomorphism, where $l=1$ if $2 \le i \le n-3$, and $l=2$ if $i=1, n-2$. 
Since $V_\lambda \otimes_K K \cong V_\lambda$ is  irreducible as an $\fS_n$-module and $[\partial_i]_j$ is clearly nonzero, we have  $[\partial_i]_j$ is injective. Since 
$\mu(\Ker \partial_{i-1})=\beta_{i,j}(R/\ISp_{(n-2,2)}) =\dim V_\lambda = \dim_K [\Image  \partial_i]_j $ 
for $i \ge 2$ by Lemma~\ref{Gor Betti}, $\cF_\bullet^{(n-2,2)}$ is a (minimal) free resolution of $R/\ISp_{(n-2,2)}$.  
Here $\mu(-)$ denote the minimal number of generators as an $R$-module. 
\qed

\section{The case $(d,d,1)$: Construction}
For $R/\ISp_{(d,d,1)}$, we define the chain complex 
\begin{equation}\label{linear}
\cF_\bullet^{(d,d,1)}:0 \too F_d \stackrel{\partial_d}{\too}  F_{d-1} \stackrel{\partial_{d-1}}{\too}  \cdots \stackrel{\partial_2}{\too}  F_1 \stackrel{\partial_1}{\too} F_0 \too 0
\end{equation}
of graded free $R$-modules as follows. 
Here $F_0 =R$  and 
$$
F_i = V_{(d, d-i+1, 1^i)} \otimes_K R(-d-i-1)
$$
for $1 \le i \le d$. As before,  set $\partial_1(e(T)\otimes 1) :=f_T \in R =F_0$. To describe $\partial_i$ for $i \ge 2$, we need the preparation. For 

\begin{equation}\label{T for linear}
T= 
\ytableausetup
{mathmode, boxsize=3.1em}
\begin{ytableau}
a_1 & b_2 &  \cdots & b_{d-i+1}& b_{d-i+2}& \cdots & b_d   \\
a_2 & c_2 &  \cdots & c_{d-i+1}  \\
%a_3\\
\vdots \\
a_{i+2}
\end{ytableau}
\end{equation}
in $\Tab(d, d-i+1, 1^i)$ and $j$ with $1 \le j \le i+2$, set 
$$
T_j  :=
\ytableausetup
{mathmode, boxsize=3.1em}
\begin{ytableau}
a_1 & b_2 &  \cdots & b_{d-i+1} &  b_{d-i+2} &   b_{d-i+3} & \cdots & b_d   \\
a_2 & c_2 &  \cdots & c_{d-i+1} & a_j \\
%a_3\\
\vdots \\
a_{j-1}\\
a_{j+1} \\
\vdots\\
a_{i+2}
\end{ytableau}
$$ 
in $\Tab(d, d-i+2, 1^{i-1})$. 
Then we set 
\begin{equation}\label{dif for linear}
\partial_i(e(T) \otimes 1) := \sum_{j=1}^{i+2} \sum_{\sigma \in H}  (-1)^{j-1} e(\sigma(T_j)) \otimes x_{a_j} \in V_{(d, d-i+2, 1^{i-1})} \otimes_K R(-d-i) =F_{i-1}
\end{equation}
for $3 \le i \le d-1$, where $H$ is the set of permutations of $\{  b_{d-i+2}, b_{d-i+3}, \ldots  ,b_d \}$ satisfying $\sigma(b_{d-i+3}) <  \sigma(b_{d-i+4}) <  \cdots < \sigma(b_d)$, and 
$$\partial_2(e(T) \otimes 1) = \sum_{j=1}^3  (-1)^{j-1} e({T_j}) \otimes x_{a_j} \in V_{(d, d, 1)} \otimes_K R(-d-2) =F_1$$ 
for $T \in \Tab(d, d-1, 1,1)$. That these $\partial_i$ are well-defined will be shown in 
Theorem~\ref{wd linear}. We are not sure whether $\partial_d$ can be defined in the same way, that is, the well-definedness is not clear in this case. However, for our purpose (i.e., to show $\partial_{d-1} \circ \partial_d=0$), it suffices to define $\partial_d(e(T) \otimes 1)$ for $T \in \SYT(d, 1^{d+1})$. Since $\{ e(T) \mid T \in \SYT(d, 1^{d+1})\}$ is a basis, we do not have to care the well-definedness. Anyway, we define $\partial_d(e(T) \otimes 1)$ for $T \in \SYT(d, 1^{d+1})$ by \eqref{dif for linear}.

\begin{ex}\label{(4,4,1)}
Our minimal free resolution $\cF_\bullet^{(4,4,1)}$ of $R/\ISp_{(4,4,1)}$ is of the form  
$$
0 \too V_{\ytableausetup{boxsize=0.25em} 
\begin{ytableau}
{} & {} & {} & {}\\
\\
\\
\\
\\
\\
\end{ytableau}}\otimes_K R(-9) \stackrel{\partial_{4}}{\too} 
 V_{\ytableausetup{boxsize=0.25em} 
\begin{ytableau}
{} & {} & {} & \\
{}  & \\
\\
\\
\\
\end{ytableau}} \otimes_K R(-8) \stackrel{\partial_{3}}{\too} 
V_{\ytableausetup{boxsize=0.25em} 
\begin{ytableau}
{} & {} & {} & \\
{}  & & \\
\\
\\
\end{ytableau}} \otimes_K R(-7)  \qquad \qquad  \qquad \quad 
$$
$$
\qquad \qquad \qquad \qquad \qquad  \qquad \qquad  
\stackrel{\partial_{2}}{\too} 
V_{\ytableausetup{boxsize=0.25em} 
\begin{ytableau}
{} & {} & {}  & \\
 {} & {} & & \\
\\
\end{ytableau}} \otimes_K R(-6) \stackrel{\partial_{1}}{\too} 
R \too 0. 
$$
The differential maps are given by  
\begin{eqnarray*}
\partial_4 \bigl ( e \bigl( \, 
\ytableausetup
{mathmode, boxsize=1em}
\begin{ytableau}
1 & 2 & 3 & 4\\
5   \\
6 \\
7 \\
8 \\
9
\end{ytableau}
\, \bigr)  \otimes 1 \bigr) 
&=& \bigl ( \, 
e \bigl( \, 
\ytableausetup
{mathmode, boxsize=1em}
\begin{ytableau}
5 & 2 & 3 & 4\\
6  & 1 \\
7 \\
8 \\
9 
\end{ytableau}
\, \bigr)   + 
e \bigl( \, 
\ytableausetup
{mathmode, boxsize=1em}
\begin{ytableau}
5 & 3 & 2 & 4\\
6  & 1 \\
7 \\
8 \\
9 
\end{ytableau}
\, \bigr) + 
e \bigl ( \, 
\ytableausetup
{mathmode, boxsize=1em}
\begin{ytableau}
5 & 4 & 2 & 3\\
6  & 1 \\
7 \\
8 \\
9 
\end{ytableau}
\, \bigr)  \, \bigr )
\otimes x_1 \\
& & -  
 \bigl ( \ 
e \bigl ( \, 
\ytableausetup
{mathmode, boxsize=1em}
\begin{ytableau}
1 & 2 & 3 & 4\\
6  & 5 \\
7 \\
8 \\
9 
\end{ytableau}
\, \bigr )   + 
e \bigl ( \, 
\ytableausetup
{mathmode, boxsize=1em}
\begin{ytableau}
1 & 3 & 2 & 4\\
6  & 5 \\
7 \\
8 \\
9 
\end{ytableau}
\, \bigr) + 
e \bigl ( \, 
\ytableausetup
{mathmode, boxsize=1em}
\begin{ytableau}
1 & 4 & 2 & 3\\
6  & 5 \\
7 \\
8 \\
9 
\end{ytableau}
\, \bigr)  \,  \bigr)
\otimes x_5
 \\
& & \qquad  \vdots \\
& & \qquad  \vdots \\
 & & -  ( \ 
e \bigl ( \, 
\ytableausetup
{mathmode, boxsize=1em}
\begin{ytableau}
1 & 2 & 3 & 4\\
5  & 9 \\
6 \\
7 \\
8 
\end{ytableau}
\, \bigr )   + 
e \bigl ( \, 
\ytableausetup
{mathmode, boxsize=1em}
\begin{ytableau}
1 & 3 & 2 & 4\\
5  & 9 \\
6 \\
7 \\
8 
\end{ytableau}
\, \bigr) + 
e \bigl( \, 
\ytableausetup
{mathmode, boxsize=1em}
\begin{ytableau}
1 & 4 & 2 & 3\\
5  & 9 \\
6 \\
7 \\
8 
\end{ytableau}
\, \bigr)  \, \bigr )
\otimes x_9
\end{eqnarray*}
and 
\begin{eqnarray*}
\partial_2 \bigl( e\bigl ( \, 
\ytableausetup
{mathmode, boxsize=1em}
\begin{ytableau}
1 & 2 & 3 & 4\\
5  & 6 & 7\\
8 \\
9
\end{ytableau}
\, \bigr )  \otimes 1 \bigr ) 
&=& e \bigl( \, 
\ytableausetup
{mathmode, boxsize=1em}
\begin{ytableau}
5 & 2 & 3 & 4\\
8  & 6 & 7 & 1\\
9 \\
\end{ytableau}
\, \bigr )  \otimes x_1 
- e \bigl( \, 
\ytableausetup
{mathmode, boxsize=1em}
\begin{ytableau}
1 & 2 & 3 & 4\\
8  & 6 & 7 & 5\\
9 \\
\end{ytableau}
\, \bigr  )  \otimes x_5\\
& & 
+ e\bigl ( \, 
\ytableausetup
{mathmode, boxsize=1em}
\begin{ytableau}
1 & 2 & 3 & 4\\
5  & 6 & 7 & 8\\
9 \\
\end{ytableau}
\, \bigr )  \otimes x_8 
- e \bigl( \, 
\ytableausetup
{mathmode, boxsize=1em}
\begin{ytableau}
1 & 2 & 3 & 4\\
5  & 6 & 7 & 9\\
8 \\
\end{ytableau}
\,\bigr  )  \otimes x_9.
\end{eqnarray*}
\end{ex}

\begin{thm}\label{2nd thm}
If $\chara(K)=0$, the complex $\cF_\bullet^{(d,d,1)}$ of \eqref{linear} is a minimal free resolution  of $R/\ISp_{(d,d,1)}$.  
\end{thm}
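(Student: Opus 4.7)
The plan is to follow the four-step strategy of Theorem~\ref{1st thm}: first compute the graded Betti numbers of $R/\ISp_{(d,d,1)}$; second, establish well-definedness of each $\partial_i$; third, verify $\partial_{i-1}\partial_i = 0$; fourth, deduce exactness and minimality from the $\fS_n$-module structure. For Step~1, I would use the fact cited after Theorem~\ref{prev paper main} that $R/\ISp_{(d,d,1)}$ is Cohen--Macaulay with a linear resolution when $\chara(K)=0$, hence $\beta_i(R/\ISp_{(d,d,1)}) = \beta_{i,d+i+1}(R/\ISp_{(d,d,1)})$ for each $i$. Combining the Hilbert series of $R/\ISp_{(d,d,1)}$ (available from \cite{MW,Y}) with the hook formula applied to $(d, d-i+1, 1^i)$, one should obtain $\beta_i(R/\ISp_{(d,d,1)}) = \dim_K V_{(d, d-i+1, 1^i)} = \rank F_i$, in exact analogy with Lemma~\ref{Gor Betti}.

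The main technical obstacle is Step~2. By Proposition~\ref{relation generated}, it suffices to show that
\[
\sum_{\sigma \in S_T(A,B)} \sgn(\sigma)\,\partial_i(e(\sigma T) \otimes 1) = 0
\]
for every Garnir pair $(A,B)$ relative to $T$ as in \eqref{T for linear}. I would case-split according to where $(A,B)$ sits in $T$, closely paralleling Cases~1--3 of Theorem~\ref{wdGor}. Pairs $(A,B)$ lying entirely inside one of the blocks $\{b_2, \ldots, b_{d-i+1}\}$, $\{c_2, \ldots, c_{d-i+1}\}$, or $\{b_{d-i+2}, \ldots, b_d\}$, or inside a single column not involving $a_1$, are immediate because the Garnir averaging commutes with both the sum over $j$ and the averaging over $H$. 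The genuinely interesting pairs are those crossing columns~1 and~2 (where $a_1$ is involved), those crossing columns~2 and~3 (near the ``cut'' between $b_{d-i+1}$ and $b_{d-i+2}$), and the mixed pairs involving one $b$-entry and one $a$-entry. For each I would exhibit explicit bijections between cosets $\{\sigma \in S_T(A,B) \mid \sigma(a_k) = a_j\}$ and Garnir cosets $S_{T'}(A',B')$ for modified tableaux $T'$, as in the proof of Theorem~\ref{wdGor}. The extra wrinkle absent in the $(n-2,2)$ case is that these bijections must be compatible with the $H$-averaging, which will likely require composing with auxiliary elements of $H$ to match up the two sides.

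For Step~3, the iterated composition $\partial_{i-1}\partial_i$ is indexed by ordered pairs $(j,k)$ with $1 \le j \ne k \le i+2$; the $(j,k)$ and $(k,j)$ contributions cancel thanks to the column antisymmetry $e(\sigma T) = \sgn(\sigma) e(T)$ together with a determinant-type identity on three elements of column~1, exactly as in the $\partial_1\partial_2$ calculation in the $(n-2,2)$ case, while the $H$-sum passes through and matches termwise on both composed sides. The top-row case $\partial_{d-1}\partial_d$ only needs to be checked on $T \in \SYT(d, 1^{d+1})$, where a Garnir relation on three consecutive entries of the first column should suffice.

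Finally, for Step~4, since $\Image \partial_1 = \ISp_{(d,d,1)}$, the complex $\cF_\bullet^{(d,d,1)}$ is a subcomplex of the minimal free resolution of $R/\ISp_{(d,d,1)}$. Each $\partial_i$ is $\fS_n$-equivariant by construction, and restricting to the bottom graded component yields an $\fS_n$-homomorphism $[\partial_i]_{d+i+1} : V_{(d, d-i+1, 1^i)} \to V_{(d, d-i+2, 1^{i-1})} \otimes_K R_1$. Provided it is nonzero (which can be checked on a single well-chosen $T$), the irreducibility of $V_{(d, d-i+1, 1^i)}$ in characteristic~$0$ forces injectivity on this lowest piece, and comparison with the Betti numbers of Step~1 then forces $\cF_\bullet^{(d,d,1)}$ to be the minimal free resolution. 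The step I expect to be hardest is Step~2: the extra summation over $H$ makes the coset bijections substantially more delicate than in Theorem~\ref{wdGor}, and several of the cases will have no direct analogue there.
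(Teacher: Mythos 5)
Your four-step plan matches the paper's proof of Theorem~\ref{2nd thm} essentially verbatim: Lemma~\ref{linear Betti} computes the Betti numbers (via \cite[Theorem~3.5.17]{V} for Cohen--Macaulay ideals with linear resolutions rather than via the Hilbert series, but to the same effect), Theorem~\ref{wd linear} handles well-definedness by reducing to exactly the three nontrivial Garnir pairs you identify, and the endgame is the same irreducibility-plus-Betti-count argument as in Theorem~\ref{1st thm}. The only point where you overestimate the difficulty is the $H$-averaging in Step~2: since $H$ permutes only $\{b_{d-i+2},\ldots,b_d\}$ while the Garnir permutations in cases (1) and (2) move only the $a_k$, $b_2$, $c_2$, the two sums have disjoint supports and the $H$-sum simply factors out, and the remaining cut case (3) is disposed of by observing that interchanging two equal-length columns does not change the polytabloid.
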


%That $F_i = V_{(d, d-i+1, 1^i)} \otimes_K R(-d-i-1)$ is first conjectured by Satoshi Murai (in private communication), and this idea is the origin of the present paper.  

\section{The case $(d,d,1)$: Proof}

\begin{lem}\label{linear Betti}
For all $i$ with $1\leq i \leq d$, we have 
$$\beta_{i,i+d+1}(R/\ISp_{(d,d,1)})= \dim_K V_{(d,d-i+1,1^{i})}. $$
\end{lem}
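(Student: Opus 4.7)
The plan is to determine all the Betti numbers of $R/\ISp_{(d,d,1)}$ purely from the shape of its (pure, linear) minimal free resolution via the Herzog--Kühl equations, and then to match them against $\dim_K V_{(d,d-i+1,1^i)}$ computed from the hook length formula. The two facts recalled at the end of Section~2 --- that $R/\ISp_{(d,d,1)}$ is Cohen--Macaulay of codimension $d$ in characteristic zero, and that its minimal free resolution is linear --- mean that this resolution is pure with shifts $(d_0, d_1, \ldots, d_d) = (0, d+2, d+3, \ldots, 2d+1)$. Since the resolution is linear, $\beta_i(R/\ISp_{(d,d,1)}) = \beta_{i,i+d+1}(R/\ISp_{(d,d,1)})$, so it suffices to compute $\beta_i$.

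First I would apply the hook length formula to $(d, d-i+1, 1^i)$: the hook of the corner is $d+i+1$; the rest of row~$1$ contributes hooks $d, d-1, \ldots, i+1$ and $i-1, i-2, \ldots, 1$ (the value $i$ is skipped); the box $(2,1)$ has hook $d+1$ and the rest of row~$2$ contributes $d-i, d-i-1, \ldots, 1$; and the first column below row~$2$ contributes $i, i-1, \ldots, 1$. Multiplying and dividing into $(2d+1)!$ gives
\begin{equation*}
\dim_K V_{(d,d-i+1,1^i)} \;=\; \frac{(2d+1)!}{(d+1)(d+i+1)\,d!\,(d-i)!\,(i-1)!}.
\end{equation*}

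Next, by the Herzog--Kühl theorem, the Betti numbers of any pure resolution of a Cohen--Macaulay module of codimension $d$ satisfy
\begin{equation*}
\sum_{i=0}^{d} (-1)^i \beta_i\,d_i^{\,k} \;=\; 0 \qquad (k = 0, 1, \ldots, d-1),
\end{equation*}
and these $d$ linear equations determine $\beta_1, \ldots, \beta_d$ uniquely once $\beta_0 = 1$ is fixed, because the coefficient matrix is a non-degenerate Vandermonde in the $d_i$. It then remains to verify that the values $\beta_i = \dim_K V_{(d,d-i+1,1^i)}$ solve this system. Using the explicit formula above and the substitution $j = i-1$, the equation for $k \ge 1$ reduces (after clearing a nonzero constant) to
\begin{equation*}
\sum_{j=0}^{d-1}(-1)^j \binom{d-1}{j}(d+j+2)^{\,k-1} \;=\; 0,
\end{equation*}
which is the standard vanishing of the $(d-1)$-fold forward difference on a polynomial of degree $k - 1 \le d-2$. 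The Euler-characteristic equation ($k=0$) reduces by the same manipulation to
\begin{equation*}
\sum_{j=0}^{d-1}(-1)^j \binom{d-1}{j}\frac{1}{d+j+2} \;=\; \frac{(d-1)!\,(d+1)!}{(2d+1)!},
\end{equation*}
which is the beta-integral identity $B(d+2, d) = \int_0^1 t^{d+1}(1-t)^{d-1}\,dt$, expanding $(1-t)^{d-1}$ and integrating term by term.

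The main obstacle is the Euler-characteristic case: unlike the higher-moment equations, it does not reduce to a pure finite-difference vanishing and instead requires the beta-function identity above. Everything else is a direct bookkeeping computation, and the conclusion $\beta_{i,i+d+1}(R/\ISp_{(d,d,1)}) = \dim_K V_{(d,d-i+1,1^i)}$ follows by uniqueness in Herzog--Kühl.
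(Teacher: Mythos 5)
Your proposal is correct and follows essentially the same route as the paper: both compute $\dim_K V_{(d,d-i+1,1^{i})}$ by the hook length formula and deduce the Betti numbers from the fact that $R/\ISp_{(d,d,1)}$ is Cohen--Macaulay of codimension $d$ with a $(d+2)$-linear, hence pure, resolution. The only difference is that the paper quotes the closed-form Herzog--K\"uhl product formula for the Betti numbers of such a resolution (\cite[Theorem~3.5.17]{V}), whereas you solve the Herzog--K\"uhl linear system directly and verify the candidate values via finite-difference and beta-integral identities --- the same underlying theorem, invoked in a slightly more laborious form.
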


\begin{proof}
By the hook formula, we have 
\begin{eqnarray*}
\dim_K V_{(d,d-i+1,1^{i})}&=&  \frac{(2d+1)!}{(d+i+1)\frac{d!}{i}(d+1)(d-i)!i!}\\
&=&\frac{(2d+1)!}{(d+i+1)(d+1)!(d-i)!(i-1)!}
\end{eqnarray*}
for all $i$ with $1\leq i \leq d$.

Since $\ISp_{(d,d,1)}$ is a Cohen-Macaulay ideal of codimensin $d$ and has  a $(d+2)$-linear resolution, %by \cite[Theorem 1.2]{SY}, 
we have 
$$\beta_i(R/\ISp_{(d,d,1)})=\beta_{i,i+d+1}(R/\ISp_{(d,d,1)})$$
for all $i \ge 1$, and  \cite[Theorem~3.5.17]{V} implies that
\begin{eqnarray*}
\beta_i(R/\ISp_{(d,d,1)})&=& \prod_{j=1}^{i-1}\frac{d+1+j}{i-j} \prod_{j=i+1}^{d}\frac{d+1+j}{j-i} \\
&=&\frac{(d+i)!}{(i-1)!(d+1)!} \cdot \frac{(2d+1)!}{(d-i)!(d+i+1)!}  \\
&=&\frac{(2d+1)!}{(d+i+1)(d+1)!(d-i)!(i-1)!}.
\end{eqnarray*}
So we are done.
\end{proof}

\begin{thm}\label{wd linear}
The maps $\partial_i$ defined in the previous section are well-defined.
\end{thm}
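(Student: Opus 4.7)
The plan is to adapt the elementary Garnir-relation argument used in the proof of Theorem~\ref{wdGor}. By Proposition~\ref{relation generated}, $\partial_i$ is well-defined once one checks that, for every Garnir pair $(A,B)$ attached to a tableau $T$ of shape $(d,d-i+1,1^i)$,
\[
\sum_{\sigma\in S_T(A,B)} \sgn(\sigma)\,\partial_i(e(\sigma T)\otimes 1)=0.
\]
Writing $T$ as in \eqref{T for linear}, the column lengths of $T$ are $(i+2,2,\ldots,2,1,\ldots,1)$, so the Garnir pairs split naturally into three families: (i) $A\subseteq$ column $k$ and $B\subseteq$ column $k+1$ with $k\ge 2$; (ii) $A$ a bottom segment of column $1$ and $B=\{b_2\}$; (iii) $A$ a bottom segment of column $1$ and $B=\{b_2,c_2\}$.

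First I would dispose of family (i), which is the analogue of Case~1 in the proof of Theorem~\ref{wdGor}. Decompose the left-hand side into its $x_{a_j}$-parts for $j=1,\ldots,i+2$. For each $j$, the formation $T\mapsto T_j$ puts $a_j$ at position $(2,d-i+2)$ but leaves the column structure involved in $(A,B)$ untouched, so $g_{A,B}$ acts as a genuine Garnir element on $T_j$. Either $g_{A,B}$ commutes with the symmetrization $\sum_{\tau\in H}$ entrywise (when $A,B$ avoid row $1$ columns $d-i+2,\ldots,d$), or $\sum_{\tau\in H} e(\tau T_j)$ is itself a Garnir sum realizing $g_{A,B}=0$ in $V_{(d,d-i+2,1^{i-1})}$; in either case the $x_{a_j}$-part vanishes.

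Families (ii) and (iii), the analogues of Cases~2 and~3 of Theorem~\ref{wdGor}, are the substantive ones, because the Garnir element genuinely mixes columns $1$ and $2$ and therefore redistributes contributions across several $x_\ast$-parts. I would follow the bijection method used there: for each variable $x_{a_j}$ (and $x_{b_2}$, plus $x_{c_2}$ in family (iii)) partition $S_T(A,B)$ according to where the corresponding entry is sent, construct explicit sign-twisted bijections from these classes onto the indexing set of a Garnir element $g_{A',B'}$ on a suitably modified target tableau $T'$, and collapse each $x_\ast$-part to $\pm\, g_{A',B'}\bigl(\sum_{\tau\in H} e(\tau T')\bigr)=0$.

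The main obstacle, and the new feature beyond the $(n-2,2)$ case, is controlling the interplay between these bijections and the inner symmetrization $\sum_{\tau\in H}$. Concretely, for each bijection $\sigma\mapsto\tau'$ one must verify that the copy of $H$ attached to the source tableau $T_j$ is carried term-for-term, with matching signs, onto the copy of $H$ attached to the target tableau $T'$; equivalently, the transpositions that define the bijection must commute (up to a tracked sign) past the $H$-symmetrization. This reduces to an elementary but bulky case check that is best organized by the location at which $b_{d-i+2},\ldots,b_d$ appear in $\sigma T$. When $i=2$ the set $H$ is a singleton and the argument collapses verbatim onto that of Theorem~\ref{wdGor}, giving a sanity check on the approach; and the map $\partial_d$ needs no verification because it was defined only on $\SYT(d,1^{d+1})$, which is a basis of $V_{(d,1^{d+1})}$.
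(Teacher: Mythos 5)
Your plan follows the paper's proof essentially verbatim: reduce to Garnir relations via Proposition~\ref{relation generated}, dispose of the pairs supported in columns $\ge 2$ whose $S(A,B)$ is disjoint from the $H$-symmetrization, and run the bijection argument of Theorem~\ref{wdGor} on the pairs involving the first column while tracking the interaction with $\sum_{\tau\in H}$. The one place where your account is looser than the paper's is the pair $A=\{b_{d-i+1},c_{d-i+1}\}$, $B=\{b_{d-i+2}\}$, which the paper isolates as a genuinely non-trivial case and settles by observing that interchanging two whole columns of equal length leaves the polytabloid unchanged --- your hedge that ``$\sum_{\tau\in H}e(\tau T_j)$ is itself a Garnir sum'' is not literally what happens there, though the case analysis you propose would uncover the correct identification.
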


\begin{proof}
The well-definedness of $\partial_1$ is nothing other than that of \eqref{Isom}, and that of $\partial_d$ is clear by the construction. So we may assume that 
 $2 \leq i \leq d-1$. By Proposition~\ref{relation generated},
if suffices to show that 
\begin{equation}\label{quasi Garnir2}
\sum_{\sigma\in S_T(A,B)} \sgn(\sigma) \,\partial_i(e(\sigma T)\otimes 1)
\end{equation} 
equals 0 for  the tableau $T$ of \eqref{T for linear}. 
The cases $A=\{b_k, c_k\}$ and $B=\{b_{k+1}\}$ for $2 \le k \le d-i$ are easy. 
The non-trivial cases are 
\begin{itemize}
\item[(1)]  $A=\{a_1,\ldots,a_{i+2}\}, B=\{b_2 \}$, 
 \item[(2)] $A=\{a_2,\ldots,a_{i+2}\}, B=\{b_2, c_2\}$,
 \item[(3)]  $A=\{b_{d-i+1}, c_{d-i+1}\}$, $B=\{b_{d-i+2}\}$. 
\end{itemize}

First, we treat the case (1). 
We may assume that $b_2<a_1<a_2<\cdots<a_{i+2}$. 
Fix  $j$ with $1\leq j \leq i+2$, and set $A_j:=A\backslash \{a_j\}$.
Since $\tau \in H$ is a permutations of $\{  b_{d-i+2}, b_{d-i+3}, \ldots  ,b_d \}$, $\sigma\in S(A_j,B)$ and $\tau \in H$ are disjoint. Hence  we have  
\begin{eqnarray*}
(\, \text{the $x_{a_j}$-part of  \eqref{quasi Garnir2}} \, )=(-1)^{j-1}\sum_{\tau \in H} g_{A_j,B} e(\tau(T_j)) \otimes x_{a_j}.
\end{eqnarray*}
For each $\tau$, we can show that $g_{A_j,B} e(\tau(T_j)) =0$ by an argument similar to the proof of Theorem \ref{wdGor}. 
That $(\, \text{the $x_{b_2}$-part of    \eqref{quasi Garnir2}} \, )=0$, and the case (2) can be proved in a similar way. 

For the case (3),  the tableau 
$$
\ytableausetup
{mathmode, boxsize=4.3em}
\begin{ytableau}
a_1 & b_2 &  \cdots & \sigma(b_{d-i+1})& b_k & \cdots &\sigma(b_{d-i+2})& \cdots  \\
a_2 & c_2 &  \cdots & \sigma(c_{d-i+1}) & a_j \\
%a_3\\
\vdots \\
\end{ytableau}
$$
appears in the $x_{a_j}$-part of \eqref{quasi Garnir2} for $d-i+3 \le k \le d$ (here we assume that $j \ge 3$ for notational simplicity). 
However the above tableau and 
$$
\ytableausetup
{mathmode, boxsize=4.3em}
\begin{ytableau}
a_1 & b_2 &  \cdots & b_k &\sigma(b_{d-i+1})&\sigma(b_{d-i+2})& \cdots  \\
a_2 & c_2 &  \cdots & a_j & \sigma(c_{d-i+1}) \\
%a_3\\
\vdots \\
\end{ytableau}
$$
give the same polytabloid $e(-)$, so the previous argument also works.  
\end{proof}

\medskip

\noindent{\it The proof of Theorem~\ref{2nd thm}.}
First, we will show that $\cF^{(d,d,1)}_\bullet$ is a chain complex. 
It is easy to see that $\partial_{i-1}\partial_i =0$ for $3 \le i \le d$. To show $\partial_1\partial_2 =0$,  consider 
$$
T= 
\ytableausetup
{mathmode, boxsize=2em}
\begin{ytableau}
a_1 & b_2 &  \cdots & b_{d-1} & b_d   \\
a_2 & c_2 &  \cdots & c_{d-1}  \\
a_3\\
a_4
\end{ytableau}.
$$
Then we have 
$$T_1= 
\ytableausetup
{mathmode, boxsize=2em}
\begin{ytableau}
a_2 & b_2 &  \cdots & b_{d-1} & b_d   \\
a_3 & c_2 &  \cdots & c_{d-1}  & a_1\\
a_4
\end{ytableau},
\qquad 
T_2= 
\ytableausetup
{mathmode, boxsize=2em}
\begin{ytableau}
a_1 & b_2 &  \cdots & b_{d-1} & b_d   \\
a_3 & c_2 &  \cdots & c_{d-1}  & a_2\\
a_4
\end{ytableau}
$$
$$T_3= 
\ytableausetup
{mathmode, boxsize=2em}
\begin{ytableau}
a_1 & b_2 &  \cdots & b_{d-1} & b_d   \\
a_2 & c_2 &  \cdots & c_{d-1}  & a_3\\
a_4
\end{ytableau},
\qquad 
T_4= 
\ytableausetup
{mathmode, boxsize=2em}
\begin{ytableau}
a_1 & b_2 &  \cdots & b_{d-1} & b_d   \\
a_2 & c_2 &  \cdots & c_{d-1}  & a_4\\
a_3
\end{ytableau}
$$
in the notation of the previous section, and it holds that  
$$%\begin{equation}\label{d1d2}
\partial_1 \partial_2 (e(T) \otimes 1)= x_{a_1}f_{T_1}-x_{a_2}f_{T_2}+x_{a_3}f_{T_3}-x_{a_4}f_{T_4}.
$$
One can show that this equals 0  by a direct computation (note that each part of the right side can be divided by $\prod_{i=2}^{d-1}(x_{b_i} -x_{c_i})$). 

\qed

\section*{Acknowledgements} 
The authors  are grateful to Professor Satoshi Murai for stimulating discussion in the beginning of this project.  They also thank Professors Steven V. Sam, Junzo Watanabe, 
Alexander Woo, and the anonymous referees of the previous version for valuable comments/information on \cite{BSS,LL} and related works.

\end{document}